\documentclass[11pt]{amsart}

 \usepackage[T1]{fontenc}
 \usepackage[utf8]{inputenc}
 \usepackage[english]{babel}
 \usepackage{graphicx}
 \usepackage{geometry}\geometry{top=5cm,bottom=2cm,left=3cm,right=3cm}

 \usepackage{amsmath,amsthm,amsfonts,amssymb}
 \usepackage{enumerate}
 \usepackage{color}
 \usepackage{fancyhdr}
 \usepackage[titletoc]{appendix}
 \usepackage{hyperref}
 \usepackage{tikz}
 \usepackage{tikz-cd}
 \usepackage{caption}
 \usetikzlibrary{matrix,arrows,decorations.pathmorphing, positioning}

%%%%%%%%%%%%%%%%%%%%%%%%%%%%%%%%
\newcommand\ce{{\hspace{2.4 pt} \searrow\hspace{-8 pt}^e \hspace{5 pt}}}
\newcommand\ee{{\hspace{3 pt} \nearrow\hspace{-13 pt}^e \hspace{8 pt}}}
\newcommand\co{{\hspace{2 pt}\searrow \hspace{3 pt}}}
\newcommand\ex{{\nearrow \hspace{3 pt}}}
\newcommand\esc{{\searrow\hspace{-6 pt}\searrow\hspace{-8 pt}^e \hspace{5 pt}}}
\newcommand\sco{{\searrow \hspace{-6 pt}\searrow\hspace{3 pt}}}
\newcommand\ese{{\hspace{3 pt} \nearrow\hspace{-16 pt}^e\nearrow \hspace{3 pt}}}
\newcommand\sex{{\nearrow \hspace{-6 pt}\nearrow\hspace{3 pt}}}

\newtheorem{theo}{Theorem}[section]

\newtheorem{lemma}[theo]{Lemma}
\newtheorem{prop}[theo]{Proposition}
\theoremstyle{definition}
\newtheorem{defi}[theo]{Definition}
\theoremstyle{definition}
\newtheorem{rem}[theo]{Remark}

\newtheorem{example}[theo]{Example}

\input{xy}
\xyoption{all}
\xyoption{poly}
\usepackage[all]{xy}

\setlength{\textwidth}{16cm}
\setlength{\topmargin}{0cm}
\setlength{\oddsidemargin}{.5cm}
\setlength{\evensidemargin}{.5cm}
\setlength{\textheight}{21.5cm}

 \newcommand\ZZ{{\mathbb{Z}}}
 \newcommand\NN{{\mathbb{N}}}

 \newcommand\st{{\mathrm{st}}}
 \newcommand\lk{{\mathrm{lk}}}
 
 \DeclareMathOperator{\gcat}{gcat}
\DeclareMathOperator{\plgcat}{plgcat}
 \DeclareMathOperator{\rk}{rk}

\title{Some remarks on PL collapsible covers of 2-dimensional polyhedra}
   \author{Eugenio Borghini}
   \address{Departamento  de Matemática - IMAS\\
 FCEyN, Universidad de Buenos Aires. Buenos Aires, Argentina.}
\email{eborghini@dm.uba.ar}

\subjclass[2010]{55M30, 57Q05, 57M20, 52B99, 20F05.}

\keywords{LS category, PL collapsible polyhedra, One-relator presentations.}

\begin{document}

   % abstract 
   \begin{abstract}
  We analyze the topology and geometry of a polyhedron of dimension 2 according to the minimum size of a cover by PL collapsible polyhedra. We provide partial characterizations of the polyhedra of dimension 2 that can be decomposed as the union of two PL collapsible subpolyhedra in terms of their simple homotopy type and certain local properties. In the process, a special class of polyhedra of dimension 2 appears naturally. We give a combinatorial description of the spaces in this class, which includes all closed surfaces and the complexes associated to one-relator presentations.
   \end{abstract}

   \maketitle

\section{Introduction}

The Lusternik-Schnirelmann (L-S) category of a topological space $X$ is the minimum cardinality of a cover of $X$ by open sets which are contractible in the space. It is a classical homotopy invariant of a space, introduced in \cite{LS}, which has became over the years an important tool in homotopy theory (see \cite{CLOT} for a good account on the subject). A natural upper bound for the L-S category of a space $X$ is provided by its \textit{geometric category} $\gcat(X)$, defined as the minimum number of open contractible sets that cover $X$. For a polyhedron $P$ (i.e. the underlying topological space of some simplicial complex), the geometric category coincides with the minimum number of contractible subpolyhedra that cover $P$.
\par In this note we propose to study a variant of the geometric category in the context of compact connected polyhedra, which we call \textit{PL geometric category} and denote it by $\plgcat$. For this invariant, we replace the purely topological notion of ``contractible'' in the definition of geometric category by the more geometrically flavored notion of ``PL collapsible'' (refer to Section \ref{definitions} for precise definitions). This point of view allows to exploit certain combinatorial properties of a space that admits triangulations while at the same time accounts for its inherent geometry and topology. We show in first place that the PL geometric category of a polyhedron of dimension $n$ is bounded by $n+1$, thus generalizing the corresponding result for geometric category (cf. \cite[Proposition 3.2]{CLOT}). This implies that the PL geometric category of a non PL collapsible polyhedron of dimension 2 may only be 2 or 3. One of our main objectives is to understand the topological and geometrical properties that distinguish 2-dimensional polyhedra $P$ with $\plgcat(P) = 2$ from those with $\plgcat(P) = 3$. In this direction, we find that the condition of having PL geometric category 2 is fairly restrictive. In particular, it determines the simple homotopy type of the polyhedron: by Proposition \ref{plgcat2} below, such a polyhedron is simple homotopy equivalent to a wedge sum of spheres of dimension 1 and 2. Moreover, it is not difficult to verify that a contractible polyhedron $P$ of dimension 2 with $\plgcat(P) = 2$ satisfies the Andrews-Curtis conjecture \cite{AC}, which states that a compact contractible 2-dimensional polyhedron $3$-deforms to a point (see Remark \ref{AC}). However, as observed in Section \ref{definitions}, the PL geometric category is not a (simple) homotopy invariant of a polyhedron. This leads us to study this invariant also from a local point of view. In this context we describe a special class of 2-dimensional polyhedra, which we call \textit{inner-connected polyhedra}, defined by a property satisfied among others by all closed surfaces. We obtain a criterion which states that sufficiently ``regular'' inner-connected polyhedra cannot have PL geometric category 2 (see Theorem \ref{No2Theo}).
\par Section \ref{onerel} is devoted to the computation of the PL geometric category of complexes associated to one-relator presentations. The main result of this section, Theorem \ref{plgcatonerel}, shows that it is possible to read off the PL geometric category of such a complex directly from the presentation.
\par In the final section of the article we investigate in more detail the class of inner-connected polyhedra. We prove that any such polyhedron is obtained from a polygon with some identifications performed on its sides. This is a generalization of the well-known result that closed surfaces admit a polygonal presentation with only one face (see for example \cite{Lee}).
\par In the recent works \cite{FMV}, \cite{AS} discrete versions of the L-S category and related invariants were introduced in the setting of finite simplicial complexes and finite topological spaces. These discrete versions depend mainly on the combinatorial structure of the involved spaces. Our notion relies more strongly on the topology and geometry of the underlying spaces.

\section{PL geometric category}\label{definitions}

In this section we introduce the notion of \textit{PL geometric category} of a polyhedron. We collect some necessary definitions first. 
\par By a \textit{polyhedron} we understand a topological space which admits triangulations, i.e. the underlying space of some simplicial complex. A subspace $Q$ of a polyhedron $P$ is a \textit{subpolyhedron} if it is the underlying space of a subcomplex of some triangulation of $P$.  We recall next the basic definitions of Whitehead's simple homotopy theory. Let $K$ be a finite simplicial complex. A simplex $\sigma$ of $K$ is a \textit{free face} of $K$ if there is a unique simplex $\tau \in K$ containing $\sigma$. In that case, we say that there is an \textit{elementary collapse} from $K$ to $L = K \setminus\{\sigma, \tau\}$, denoted $K \ce L$. More generally, $K$ \textit{collapses} to $L$, denoted by $K \co L$, if there is a sequence $K_1 = K, K_2, \dots, K_{r} = L$ such that $K_i \ce K_{i+1}$ for every $i$. We also say that $L$ expands to $K$ and denote $L \ex K$. The complex $K$ is called \textit{collapsible} if it collapses to a complex with only one vertex. A pair of simplicial complexes $K$ and $L$ are \textit{simple homotopy equivalent} if there exists a finite sequence of complexes $K_1 = K, K_2, \dots, K_r = L$ such that for every $i$ either $K_i \ee K_{i+1}$ or $K_i \ce K_{i+1}$. In that situation, we also say that there is an $n$-deformation from $K$ to $L$ if the dimension of complexes $K_1, \dots K_r$ is at most $n$. A polyhedron $P$ \textit{PL collapses} to a subpolyhedron $Q$ (and we still denote $P \co Q$) if there exist coherent triangulations $K$, $L$ of $P$ and $Q$ respectively such that $K \co L$ (see \cite[Ch.2]{Hud}). A polyhedron $P$ is called \textit{PL collapsible} if it PL collapses to a point, (i.e. some simplicial complex that triangulates $P$ collapses to a vertex).
\par The polyhedra that we work with are assumed to be compact and connected. Likewise, the simplicial complexes are assumed to be finite and connected.

\begin{defi}
Let $P$ be a polyhedron. The \textit{PL geometric category} $\plgcat(P)$ of $P$ is the minimum number of PL collapsible subpolyhedra that cover $P$.
\end{defi}

It is a well-known fact that the geometric category of a (compact, connected) polyhedron $P$ of dimension $n$ is at most $n+1$. 
We will show an analogous result for PL geometric category, namely, that a polyhedron of dimension $n$ is covered by at most $n+1$ PL collapsible subpolyhedra. The strategy for proving this, similarly as in the proof of the geometric category version, is to proceed by induction on the dimension of the polyhedron. However, for the inductive step to work in our context, a slight technical detour is needed. 
Specifically, we resort to the theory of strong homotopy types of \cite{BM12}.

\begin{defi} \cite{BM12}
Let $K$ be a simplicial complex and $v \in K$ a vertex. We say that $v$ is dominated by a vertex $v' \neq v$ if every maximal simplex that contains $v$ also contains $v'$. If $v$ is dominated by some vertex $v'$, we say that there is an \textit{elementary strong collapse} from $K$ to $K \setminus v$ and denote $K \esc K \setminus v$. In that situation we also say that there is an \textit{elementary strong expansion} from $L = K \setminus v$ to $K$ and denote it by $L \ese K$. If there is a sequence of elementary strong collapses that starts in $K$ and ends in $L$, we say that there is a \textit{strong collapse} from $K$ to $L$ and denote $K \sco L$.  The inverse of a strong collapse is called a \textit{strong expansion} and denoted by $L \sex K$.
\end{defi}

\begin{rem}
\label{strongrem} \cite[Remark 2.4]{BM12} $K \sco L$ implies that $K \co L$.
\end{rem}

Recall that the \textit{star} of a vertex $v$ in a simplicial complex $K$ is the subcomplex $\st_{K}(v) \subseteq K$ formed by the union of the simplices $\sigma \in K$ such that $\sigma \cup {v} \in K$. The \textit{link} of $v$ is the subcomplex $\lk_{K}(v) \subseteq \st_{K}(v)$ of the simplices that do not contain $v$. For a given simplex $\sigma$, its \textit{boundary} $\dot{\sigma}$ is the subcomplex formed by the simplices $\tau$ strictly contained in $\sigma$.

\begin{rem}
\label{simplicialCone} A vertex $v$ in a simplicial complex $K$ is dominated by $v'$ if and only if the link $\lk_{K}(v)$ is a simplicial cone with apex $v'$, i.e. $\lk_{K}(v) = v'M$ for certain subcomplex $M$.
\end{rem}

\begin{lemma}\label{strongcollapse}
Let $\sigma_n$ be the standard $n$-simplex. Consider the subcomplex of the second barycentric subdivision of $\sigma_n$ defined as $K_n := \sigma_n'' \setminus \st_{\sigma_{n}''}(\{v\})$, where $v$ is the barycenter of $\sigma_n$. Then $K_n$ strong collapses to $(\dot{\sigma}_n)''$.
\end{lemma}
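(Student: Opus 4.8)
The plan is to build an explicit strong collapse, removing the ``radial'' vertices one at a time in order of increasing dimension. First I would record that, being the barycentric subdivision of a simplex, $\sigma_n'$ is the simplicial cone $\sigma_n' = v \ast (\dot\sigma_n)'$ with apex the barycenter $v$; writing $B := (\dot\sigma_n)'$ this gives $\sigma_n'' = (v \ast B)'$ and $(\dot\sigma_n)'' = B'$. Reading $K_n$ as the subcomplex of $\sigma_n''$ spanned by all vertices other than $v$, its vertices are the barycenters $b_\tau$ of the simplices $\tau$ of $v \ast B$ with $\tau \neq \{v\}$. These fall into two families: the vertices $b_\tau$ with $\tau \in B$, which span exactly $B' = (\dot\sigma_n)''$, and the radial vertices $b_{v\rho}$ with $\rho$ a nonempty simplex of $B$. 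The aim is then to strong collapse $K_n$ by deleting precisely the radial vertices.

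Next I would order the radial vertices by increasing $\dim\rho$ and claim that, processed in this order, each $b_{v\rho}$ is dominated by $b_\rho$ at the moment it is removed. This rests on the standard subdivision identity $\lk_{(v\ast B)'}(b_{v\rho}) = (\partial(v\rho))' \ast (\lk_{v\ast B}(v\rho))'$ together with $\lk_{v\ast B}(v\rho) = \lk_B(\rho) =: \Lambda$, so that $\lk_{(v\ast B)'}(b_{v\rho}) = (\partial(v\rho))' \ast \Lambda'$. When $b_{v\rho}$ is reached, the vertices already deleted that lie in this link are exactly the barycenters of the proper faces of $v\rho$ containing $v$ (including $v = b_{\{v\}}$ itself), namely the radial vertices $b_{v\mu}$ with $\mu \subsetneq \rho$, all of strictly smaller dimension. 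Removing them from $(\partial(v\rho))'$ leaves the full subcomplex spanned by the barycenters $b_\mu$ with $\mu \subseteq \rho$, which is precisely $\rho' = b_\rho \ast (\partial\rho)'$. Since no vertex of the $\Lambda'$ factor is ever touched (every radial vertex contains $v$ and hence sits in the $(\partial(v\rho))'$ factor), the current link of $b_{v\rho}$ is $\rho' \ast \Lambda'$, a simplicial cone with apex $b_\rho$; by Remark \ref{simplicialCone} the vertex $b_{v\rho}$ is dominated by $b_\rho$.

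Since $b_\rho$ belongs to $B'$ it is never deleted, so each step is a genuine elementary strong collapse. Carrying them out in turn produces a strong collapse of $K_n$ onto the full subcomplex spanned by the $b_\tau$ with $\tau \in B$, that is, onto $B' = (\dot\sigma_n)''$. The main obstacle is precisely the link bookkeeping of the middle paragraph: one must verify that deleting the lower-dimensional radial vertices first collapses the sphere $(\partial(v\rho))'$ down to the cone $\rho'$, and that the incomparable radial vertices and the entire $\Lambda'$ factor play no role. Once this local computation is pinned down, the increasing-dimension ordering guarantees the domination hypothesis holds at every stage, and the strong collapse follows.
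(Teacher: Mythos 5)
Your argument is essentially the paper's proof: you delete exactly the vertices of $\lk_{\sigma_n''}(\{v\})$ (your radial vertices $b_{v\rho}$) in order of increasing $\dim\rho$, and the dominating vertex $b_\rho$ is the same one the paper uses, namely the barycenter of the chain with $v$ stripped off; the only difference is presentational, in that the paper works with chains of simplices of $\sigma_n'$ and writes out only the lowest case $\dim\rho=0$ explicitly, whereas you carry out the general step via the join decomposition of the link. One parenthetical justification in your middle paragraph is incorrect, although it does not affect the conclusion: the vertices of the factor $\Lambda'=(\lk_{v\ast B}(v\rho))'$ inside $\lk_{\sigma_n''}(b_{v\rho})$ are the barycenters of the proper \emph{cofaces} of $v\rho$ in $v\ast B$, and every such coface contains $v$, so these vertices are themselves radial --- it is not true that ``every radial vertex sits in the $(\partial(v\rho))'$ factor''. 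What actually saves the step is your ordering: the cofaces are of the form $v\rho''$ with $\rho\subsetneq\rho''$, hence $\dim\rho''>\dim\rho$, so none of the corresponding radial vertices has been deleted by the time $b_{v\rho}$ is processed; the current link is therefore indeed $\rho'\ast\Lambda'$, a cone with apex $b_\rho$, and by Remark \ref{simplicialCone} the elementary strong collapse is valid. With that justification repaired, the proof is correct and coincides with the paper's.
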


\begin{proof}
We view the simplices of the second subdivision of $\sigma_n$ as chains of simplices of $\sigma_n'$ ordered by inclusion. Let $w \in \lk_{\sigma_n''}(\{v\})$ be a vertex. This means that $\{v\} \cup w$ forms a $1$-simplex in $\sigma_n''$ and so there is a chain of inclusion of simplices $\{ \{v\} \subseteq w \}$. If the length of the chain $w$ is two, say $w = \{ v \subseteq a \}$, any maximal simplex of $\sigma_n''$ containing $w$ either contains $v$ or $a$. Since $v \not\in K_n$, this shows that $w$ is dominated by $a$ in $K_n$. By removing the vertices of $\lk_{\sigma_n''}(\{v\})$ in non-decreasing order of the length of the chain they represent, we see that $K_n \sco (\dot{\sigma}_n)''$.
\end{proof}

\begin{lemma}\label{induc}
Let $K$, $L$ be simplicial complexes such that $L \sex K$. If $L$ can be covered by $n$ strong collapsible subcomplexes, so does $K$.
\end{lemma}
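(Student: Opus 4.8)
The plan is to reduce to the case of a single elementary strong expansion and then exploit the canonical retraction associated with a dominated vertex. Since a strong expansion $L \sex K$ is by definition a finite composition of elementary strong expansions, it suffices to prove the statement when $L \ese K$ is elementary and then iterate. So I would assume $K = L \cup \st_K(v)$, where $v \notin L$ is a vertex dominated in $K$ by some $v' \neq v$; by Remark \ref{simplicialCone} this means $\lk_K(v) = v'M$ for a subcomplex $M$, and hence $\st_K(v) = vv'M$.

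The main idea is that the naive augmentation—attaching the cone $v\ast(L_i \cap \lk_K(v))$ to each member—does \emph{not} work, because the intersection of a strong collapsible subcomplex with the cone $\lk_K(v)=v'M$ need not itself be a cone, and then the new vertex $v$ would fail to be dominated. Instead I would use the simplicial retraction $r\colon K \to L$ defined on vertices by $r(v)=v'$ and $r(u)=u$ for $u\neq v$. First I would check that $r$ is simplicial: a simplex not containing $v$ lies in $L$ and is fixed, while a simplex $v\cup\sigma$ with $\sigma\in\lk_K(v)=v'M$ is sent to $v'\cup\sigma$, which again belongs to $v'M\subseteq L$. In particular $r$ restricts to the identity on $L$. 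Writing the given cover as $L=\bigcup_{i=1}^{n}L_i$ with each $L_i$ strong collapsible, I would then set $\tilde L_i := r^{-1}(L_i)=\{\tau\in K : r(\tau)\in L_i\}$. Since $r$ is simplicial, each $\tilde L_i$ is a subcomplex, and since $r(\tau)\in L=\bigcup_i L_i$ for every $\tau\in K$, these $n$ subcomplexes cover $K$.

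It then remains to show that each $\tilde L_i$ is strong collapsible. If $v\notin\tilde L_i$ then $\tilde L_i=L_i$ and there is nothing to prove; otherwise $v'\in L_i$, and the crucial computation gives $\lk_{\tilde L_i}(v)=\{\sigma\in v'M : \sigma\cup v'\in L_i\}$. This set contains $v'$ and is closed under adjoining $v'$ (if $\sigma\cup v'\in L_i$ then $(\sigma\cup v')\cup v'\in L_i$, and $\sigma\cup v'\in v'M$ because $v'M$ is a cone), so it is a simplicial cone with apex $v'$. By Remark \ref{simplicialCone}, $v$ is dominated by $v'$ in $\tilde L_i$, yielding an elementary strong collapse $\tilde L_i \esc \tilde L_i\setminus v$. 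Removing the open star of $v$ leaves exactly the simplices $\tau\in\tilde L_i$ with $v\notin\tau$, and for these $r(\tau)=\tau\in L_i$; hence $\tilde L_i\setminus v=L_i$. Therefore $\tilde L_i\sco L_i$, and since $L_i$ is strong collapsible so is $\tilde L_i$.

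The step I expect to be the main obstacle is precisely this strong collapsibility of the augmented pieces. The apparent difficulty is that the link $\lk_K(v)=v'M$ is distributed among several members of the cover, so no single member need contain it, and any attempt to cone off an \emph{intersection} of $L_i$ with $v'M$ breaks the cone structure. Passing instead to the full preimage $r^{-1}(L_i)$ is what forces $\lk_{\tilde L_i}(v)$ to be a cone with apex $v'$ inside each piece—exactly the condition needed for $v$ to be dominated and hence for the strong collapse back to the strong collapsible complex $L_i$ to exist.
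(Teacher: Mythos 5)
Your proof is correct, and it follows the same overall strategy as the paper (reduce to a single elementary strong expansion, enlarge each member of the cover across the star of the new vertex $v$ so that $v$ is dominated by $v'$ and the enlarged piece strong collapses back onto the original one), but the enlargement itself is genuinely different. The paper takes $K_i = L_i \cup v(v'M \cap L_i)$ and asserts that $\lk_{K_i}(v) = v'(M\cap L_i)$; as you observe, the actual link is $v'M\cap L_i$, which need not be a cone with apex $v'$ (nor a cone at all), since $L_i$ may contain a simplex $\sigma$ of $v'M$ without containing $\sigma\cup v'$. Your replacement $\tilde L_i = r^{-1}(L_i)$, with $r$ the folding retraction $v\mapsto v'$, amounts to coning $v$ over the saturated set $\{\sigma\in v'M:\sigma\cup v'\in L_i\}$ instead, and your verification that this set is a cone with apex $v'$ (it contains $v'$ and is closed under adjoining $v'$) is exactly what makes the domination argument airtight; the identification $\tilde L_i\setminus v = L_i$ and the covering property are also checked correctly. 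In short, your construction buys a cleaner and fully justified domination step where the paper's formula, read literally, leaves a gap to fill, at the modest cost of introducing the retraction $r$ and its preimages.
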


\begin{proof}
Let $\{L_1, \dots, L_n\}$ be a cover of $L$ by $n$ strong collapsible subcomplexes and assume that there is an elementary strong expansion from $L$ to $K$, say $L = K \setminus v$ for certain $v \in K$. Let $v' \in K$ be a vertex that dominates $v$, so that $\lk_{K}(v) = v'M$ for some subcomplex $M$ of $L$. For each $1 \leq i \leq n$, define the subcomplex $K_i$ of $K$ as
\[
K_i = 
\begin{cases} 
	L_i \cup v(v'M \cap L_i) \text{ if } v'M \cap L_i \neq \emptyset, \\ 
	L_i \text{ otherwise.}
\end{cases}
\]
If $v'M \cap L_i$ is nonempty, then $v \in K_i$ and is clearly dominated by $v'$ because $\lk_{K_i}(v) = v'(M \cap L_i)$. In any case, $K_i$ strong collapses to $L_i$ and is therefore strong collapsible. This shows that $K$ is covered by $n$ strong collapsible subcomplexes. The conclusion follows by induction on the number of elementary strong expansions from $L$ to $K$.
\end{proof}

We are now able to prove that the PL geometric category of a polyhedron of dimension $n$ is bounded from above by $n+1$. We will prove the following slightly stronger result.

\begin{prop}\label{plgcatDimN}
Let $K$ be a complex of dimension $n$. Then, the second barycentric subdivision $K''$ of $K$ can be covered by $n+1$ strong collapsible subcomplexes.
\end{prop}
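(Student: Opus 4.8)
The plan is to prove the statement by induction on the dimension $n$, treating the second subdivision as the source of the ``room'' needed to keep all pieces connected. The cases $n=0$ and $n=1$ I would dispatch by hand: for $n=0$ the complex $K''$ is a single vertex, and for a connected graph ($n=1$) the point is that after two subdivisions $K''$ is sparse enough to be covered by two subtrees, which are exactly the strong collapsible $1$-dimensional subcomplexes. For the inductive step, with $n\ge 2$, I would peel off the top cells. Writing $\hat\sigma_1,\dots,\hat\sigma_m$ for the barycenters of the $n$-simplices of $K$, viewed as vertices of $K''$, let $L\subseteq K''$ be the full subcomplex on the vertices different from the $\hat\sigma_j$, i.e. the antistar of $\{\hat\sigma_1,\dots,\hat\sigma_m\}$. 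On each subdivided top simplex $\sigma_j''$ this antistar is precisely the complex $K_n$ of Lemma~\ref{strongcollapse}, and the strong collapses furnished there only delete vertices interior to $\sigma_j$; performing them simultaneously over all $j$ assembles into a single strong collapse $L\sco (K^{(n-1)})''$, where $K^{(n-1)}$ denotes the $(n-1)$-skeleton of $K$. Since a connected complex has connected $1$-skeleton and $n\ge 2$, the complex $K^{(n-1)}$ is connected, so the inductive hypothesis applies to it.

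By induction $(K^{(n-1)})''$ is covered by $n$ strong collapsible subcomplexes, and since $(K^{(n-1)})''\sex L$, Lemma~\ref{induc} upgrades this to a cover of $L$ by $n$ strong collapsible subcomplexes $A_1,\dots,A_n$. What remains is to cover the open stars $\st^\circ_{K''}(\hat\sigma_j)$ using a single additional piece $A_{n+1}$. Here I would exploit the nested structure provided by the second subdivision of each $\sigma_j$: inside $\sigma_j''$ there is a collar between the boundary sphere $(\dot\sigma_j)''$ and an interior sphere bounding the central cone $\st_{K''}(\hat\sigma_j)$. I would enlarge each $A_i$ by attaching to it, inside every $\sigma_j''$, the part of the collar lying over its boundary trace $A_i\cap(\dot\sigma_j)''$ (this collar piece strong collapses back onto the trace, so the enlarged $A_i$ still strong collapses onto $A_i$ and remains strong collapsible), and I would build $A_{n+1}$ out of the central cones $\st_{K''}(\hat\sigma_j)$, one for each top simplex. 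Because the $A_i$ already cover $(\dot\sigma_j)''$, the enlarged $A_i$ together cover the entire collar, and adding the central cones covers each open star.

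The main obstacle is making $A_{n+1}$ simultaneously connected and strong collapsible. The central cones are pairwise separated by the collars, so their plain union is totally disconnected; on the other hand, reconnecting them through every shared $(n-1)$-face reintroduces the entire top-dimensional topology of $K$ (for a closed surface this union is all of $K''$), which is very far from collapsible. My resolution is to link the central cones only along a spanning tree of the dual graph of the top simplices---nodes the $\sigma_j$, edges the shared $(n-1)$-faces---inserting one thin collapsible bridge per tree edge, so that $A_{n+1}$ becomes a connected tree of cones and therefore strong collapses to a point, while the adjacencies omitted from the tree are harmlessly covered by the collar enlargements of the $A_i$. The technical heart of the argument is thus twofold: verifying that the collar enlargements preserve strong collapsibility of the $A_i$, and that the tree-of-cones $A_{n+1}$ is strong collapsible. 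The acyclicity forced by choosing a spanning tree (rather than the full dual graph) is exactly what prevents the non-collapsible cycles, and the second subdivision is what guarantees the collars and bridges are available. With these in place the $n+1$ subcomplexes $A_1,\dots,A_{n+1}$ cover $K''$, and the induction closes.
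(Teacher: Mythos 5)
Your proposal follows essentially the same route as the paper's proof: induction on $n$, a spanning-tree argument in the base case, and an inductive step that deletes the barycenters of the top simplices, uses Lemma \ref{strongcollapse} to strong collapse the resulting antistar $L$ onto $(K^{(n-1)})''$, transports the inductive cover back to $L$ via Lemma \ref{induc}, and absorbs the central stars into one additional strong collapsible piece. One remark before the main point: the collar enlargement of the $A_i$ is redundant. Lemma \ref{induc} already produces subcomplexes $A_1,\dots,A_n$ covering all of $L$, and $L$ (the full subcomplex on the vertices other than the $\hat\sigma_j$) already contains the entire collar region of each $\sigma_j''$, i.e.\ everything outside the open star of $\hat\sigma_j$; so nothing there is left to cover.

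There is, however, a genuine gap in the construction of $A_{n+1}$: you connect the central cones by bridges placed along a spanning tree of the dual graph whose edges are the shared $(n-1)$-faces of top simplices. That graph need not be connected --- take two $n$-simplices glued along a single vertex, or more generally any homogeneous complex that is not strongly connected, or a non-homogeneous complex whose top simplices are separated by lower-dimensional pieces. In such cases your $A_{n+1}$ is a disjoint union of trees of cones, hence disconnected and not strong collapsible. The only hypothesis available is that $K$ (hence $K''$) is connected, so the bridges must be allowed to run along arbitrary arcs in $K''$, possibly through lower-dimensional parts of the complex, rather than only through shared codimension-one faces; this is exactly how the paper phrases the final step (``since $K''$ is connected \dots\ we can include their union in a strong collapsible subcomplex''). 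With that correction --- together with the verification you implicitly use, that a tree of cones joined by arcs strong collapses to a point --- your argument closes.
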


\begin{proof}
Proceed by induction on $n$, the dimension of $K$. When $n = 1$, $K$ is a simplicial graph. We show first that in this case $K'$ admits a cover by two strong collapsible subcomplexes. In order to produce the strong collapsible cover, let $T$ be a spanning tree of the graph $K$ and note that each edge $e \in K \setminus T$ becomes the union of two edges in $K'$, say $e = e_1 \cup e_2$. Consider the following subcomplexes of $K'$:
\[ K_1 = T' \cup \bigcup_{e \in K \setminus T} e_1 \text{ , } K_2 = T' \cup \bigcup_{e \in K \setminus T} e_2. \]
As $K_1$, $K_2$ both strong collapse to $T'$, they are strong collapsible and they clearly cover $K'$. Since their barycentric subdivions are also strong collapsible, the base case is complete.
\par Let now $K$ be a simplicial complex of dimension $n$. By inductive hypothesis, the second barycentric subdivision of the ($n-1$)-skeleton $\left(K^{(n-1)}\right)''$ of $K$ can be covered by $n$ strong collapsible subcomplexes $K_1, \dots, K_n$. Let $v_1, \dots , v_r$ be the barycenters of the maximal simplices of $K$. By Lemma \ref{strongcollapse}, we see that $\left(K^{(n-1)}\right)'' \sco K'' \setminus \bigcup_{i=1}^{r} \st_{K''}(\{v_i\})$ and so Lemma \ref{induc} implies that this last complex is covered by $n$ strong collapsible subcomplexes. Since $K''$ is connected and $\st_{K''}(\{v_i\})$ is strong collapsible for every $i$, we can include their union in a strong collapsible subcomplex of $K''$.
\end{proof}

As a consequence, in dimension 1 the PL geometric category only distinguishes trees (contractible graphs) from the rest of graphs. The first non trivial case is the class of 2-dimensional polyhedra. Since PL collapsible polyhedra are relatively well understood, the interest is centered in understanding the difference between polyhedra of PL geometric category 2 from those of PL geometric category 3. Our first step in this direction concerns the simple homotopy type of a polyhedron $P$ of dimension 2 with $\plgcat(P) = 2$. By a result of C.T.C. Wall \cite{Wa}, a polyhedron $P$ with $\gcat(P) = 2$ has the homotopy type of a finite wedge sum of spheres of dimension $1$ and $2$. We show that a polyhedron $P$ with $\plgcat(P) = 2$ $3$-deforms to the suspension of a graph.

\begin{lemma}\label{suspension}
Let $K$ be a simplicial complex of dimension 2 which is covered by collapsible subcomplexes $K_1$, $K_2$. Then there is a $3$-deformation from $K$ to the suspension $\Sigma(K_1 \cap K_2)$ of $K_1 \cap K_2$.
\end{lemma}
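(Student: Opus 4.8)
The plan is to realize the suspension by \emph{expanding} $K$ to a union of two cones and then \emph{collapsing} each cone onto the cone over $A := K_1\cap K_2$. The conceptual point is that a collapsible complex need not collapse onto a prescribed subcomplex (one cannot in general collapse $K_i$ onto $A$), but a \emph{cone} always collapses onto the cone over any subcomplex, and it is precisely this move that manufactures the suspension.

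Concretely, I would first introduce two new vertices $n,s\notin K$ and set $M := (n\ast K_1)\cup(s\ast K_2)$, which has dimension $\le 3$ since $\dim K_i\le 2$, and satisfies $(n\ast K_1)\cap(s\ast K_2)=A$. Because $K_1$ is collapsible, the cone $n\ast K_1$ collapses onto its base $K_1$ using only simplices containing $n$: mirror the collapse $K_1\co\{x_1\}$ on the $n$-cone simplices, keeping the base fixed, and then remove the leftover whisker $n x_1$. Since the cofaces of an $n$-simplex are again $n$-simplices, this collapse stays valid after $n\ast K_1$ is glued to $K$ along $K_1$, so $K\ex K\cup(n\ast K_1)$, and likewise after attaching $s\ast K_2$; hence $K\ex M$ through complexes of dimension $\le 3$.

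Next I would collapse $M\co\Sigma A$. The engine is the elementary fact that for any subcomplex $L\subseteq J$ one has $v\ast J\co v\ast L$: repeatedly take a simplex $\rho$ of top dimension in $(\text{current }J)\setminus L$, observe that it is maximal and that $v\rho$ is its unique coface, and collapse the free pair $(\rho,v\rho)$; this deletes $J\setminus L$ together with its cone while leaving $v\ast L$ untouched. Applying this with $(J,L)=(K_1,A)$ gives $n\ast K_1\co n\ast A$, and with $(J,L)=(K_2,A)$ gives $s\ast K_2\co s\ast A$, so that $M\co(n\ast A)\cup(s\ast A)=\Sigma A$.

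The point requiring care—and the main obstacle—is checking that these two cone collapses remain valid inside the union $M$ rather than merely in the separate cones. This holds because each free face $\rho$ used lies in $K_i\setminus A$ and therefore has no coface in the opposite cone: a simplex of $s\ast K_2$ properly containing $\rho$ would force $\rho\in K_2$, hence $\rho\in A$, a contradiction (and symmetrically on the other side). Thus the two collapses do not interfere, every intermediate complex has dimension $\le 3$, and composing $K\ex M$ with $M\co\Sigma A$ yields the desired $3$-deformation. I expect the only delicate step to be this bookkeeping of cofaces in $M$; conceptually the argument is simply ``pass to cones, where collapsing onto a subcone is always free.''
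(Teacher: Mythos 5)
Your proof is correct and follows essentially the same route as the paper: expand $K$ to the union of cones $v_1K_1\cup v_2K_2$ (using collapsibility of the $K_i$ to justify the expansion) and then collapse each cone onto the cone over $K_1\cap K_2$. In fact you supply more detail than the paper does, in particular the justification that coning off a collapsible subcomplex really is an expansion and that the two cone collapses do not interfere inside the union.
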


\begin{proof}
Cone off $K_1$, $K_2$ with vertices $v_1$, $v_2$. This gives an expansion $K \ex v_1K_1 \cup v_2K_2$. Collapse every new simplex based on a simplex contained in $K_1$ or $K_2$ but not in both. Hence, $K \ex v_1K_1 \cup v_2K_2 \co v_1 (K_1 \cap K_2) \cup v_2 (K_1 \cap K_2)$, which is the desired 3-deformation.
\end{proof}

\begin{lemma}\label{SubofCollapsible}
Let $K$ be a collapsible simplicial complex of dimension $2$ and $L$ a subcomplex of $K$. If $\dim L = 2$, $L$ collapses to a graph, i.e. a complex of dimension 1.
\end{lemma}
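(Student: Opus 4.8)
The plan is to exploit the collapsibility of the ambient complex $K$ directly. Since $K$ is collapsible, it collapses to a single vertex, and I would fix one such collapsing sequence $K = K_1 \ce K_2 \ce \dots \ce K_m$ with $K_m$ a single vertex. Because $\dim K = 2$, the only proper cofaces of an edge are the triangles containing it, so each elementary collapse is of one of two types: it removes a free vertex together with an edge, or it removes a free edge together with the unique triangle containing it. In particular, every $2$-simplex $\tau$ of $K$ is eventually discarded at a well-defined step, namely as the unique coface of some free edge $\sigma_\tau \subseteq \tau$. This assignment $\tau \mapsto \sigma_\tau$ is injective, since a given edge can be deleted only once along the collapse.

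My idea is to transport the collapses that remove triangles to $L$. For every triangle $\tau$ of $L$ we have $\tau \in K$, so $\sigma_\tau$ is defined, and $\sigma_\tau \subseteq \tau$ forces $\sigma_\tau \in L$. I would then process the triangles of $L$ in the same order in which they are collapsed inside $K$, removing at each step the pair $(\sigma_\tau, \tau)$. The goal is to check that this is a legitimate collapsing sequence for $L$; once all such pairs have been removed, no $2$-simplex of $L$ survives, so the resulting subcomplex $L'$ satisfies $\dim L' \le 1$ and $L \co L'$ is a graph.

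The main point to verify — and the only real obstacle — is that $\sigma_\tau$ is genuinely a free face of $\tau$ in $L$ at the moment $\tau$ is about to be removed, since freeness in $K$ does not obviously descend to the subcomplex $L$. To settle this I would argue as follows. When $\tau$ is collapsed inside $K$, the edge $\sigma_\tau$ is free in $K$, i.e. $\tau$ is the only triangle of $K$ still present that contains $\sigma_\tau$. In the transported process on $L$, the triangles already deleted are precisely those of $L$ preceding $\tau$ in the chosen order, so the triangles of $L$ containing $\sigma_\tau$ that still remain are exactly those not preceding $\tau$; each of these is a triangle of $K$ still present at the collapse step of $\tau$ in $K$, and hence must equal $\tau$. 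Since the proper cofaces of an edge are triangles only, this shows that $\sigma_\tau$ is contained in the single remaining simplex $\tau$ of $L$, i.e. it is a free face. Injectivity of $\tau \mapsto \sigma_\tau$ guarantees that $\sigma_\tau$ has not been deleted at an earlier step, so the elementary collapse $(\sigma_\tau, \tau)$ is well defined in the current subcomplex. Iterating over all triangles of $L$ then yields the desired collapse of $L$ onto a $1$-dimensional complex.
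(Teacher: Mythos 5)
Your proposal is correct and follows essentially the same route as the paper: fix a collapsing sequence for $K$, order the $2$-simplices of $K$ accordingly, and observe that at each stage the next $2$-simplex of $L$ in that order still has its designated edge as a free face in what remains of $L$. The paper phrases this as a short induction on the number of $2$-simplices of $L$, while you spell out the transported sequence explicitly, but the underlying argument is identical.
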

\begin{proof}
Choose an ordering $\sigma_1, \sigma_2, \dots, \sigma_r$ of the $2$-simplices of $K$ that induces a valid sequence of collapses. It is clear then that the first $2$-simplex of $L$ appearing in that list must have a free face in $L$ and hence $L$ collapses to a subcomplex with one fewer 2-simplex. By induction on the number of $2$-simplices of $L$, it follows that $L$ collapses to a graph.
\end{proof}

\begin{prop}\label{plgcat2}
Let $P$ be a polyhedron of dimension $2$ such that $\plgcat(P) = 2$. Then $P$ $3$-deforms to the suspension of a graph.
\end{prop}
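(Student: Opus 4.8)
The plan is to reduce the statement to the two preceding lemmas together with one elementary observation about suspensions. Since $\plgcat(P) = 2$, we may write $P = Q_1 \cup Q_2$ with each $Q_i$ a PL collapsible subpolyhedron. The first task is to realize this cover simplicially: I would produce a single triangulation $K$ of $P$ in which $Q_1$ and $Q_2$ appear as subcomplexes $K_1$, $K_2$ that are \emph{simplicially} collapsible, so that $K = K_1 \cup K_2$ satisfies the hypotheses of Lemma \ref{suspension}. This is the delicate point and I return to it below. Granting it, Lemma \ref{suspension} immediately yields a $3$-deformation from $P = |K|$ to the suspension $\Sigma(K_1 \cap K_2)$.

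It then remains to replace $K_1 \cap K_2$ by a graph without leaving the $3$-deformation class of its suspension. Observe that $K_1 \cap K_2$ is a subcomplex of the collapsible complex $K_1$, and has dimension at most $2$. If $\dim(K_1 \cap K_2) \le 1$ it is already a graph; otherwise Lemma \ref{SubofCollapsible} provides a collapse $K_1 \cap K_2 \co G$ onto a graph $G$. The key observation is that suspension commutes with collapse: an elementary collapse $A \ce B$ removing a free pair $(\sigma, \tau)$ induces a collapse $\Sigma A \co \Sigma B$, for if $a$, $b$ denote the two suspension points, then $(a\sigma, a\tau)$ and $(b\sigma, b\tau)$ are free pairs of $\Sigma A$, and after deleting them the pair $(\sigma, \tau)$ has itself become free and may be removed, leaving exactly $\Sigma B$. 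Applying this to each elementary collapse in $K_1 \cap K_2 \co G$ yields a collapse $\Sigma(K_1 \cap K_2) \co \Sigma G$. Since $K_1 \cap K_2$ has dimension at most $2$, every complex occurring has dimension at most $3$, so this collapse is a $3$-deformation. Composing with the previous step, $P$ $3$-deforms to $\Sigma G$, the suspension of a graph, as desired.

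The main obstacle is the initial reduction to a simplicial cover. The definition of PL collapsibility only guarantees that \emph{some} triangulation of each $Q_i$ collapses to a point; these triangulations need not agree along the overlap $Q_1 \cap Q_2$, and conversely an arbitrary common triangulation of $P$ need not restrict to collapsible complexes on the $Q_i$ (collapsibility is not preserved under arbitrary subdivision). To handle this I would invoke the machinery of coherent triangulations and PL collapses (cf. \cite[Ch.~2]{Hud}): beginning from collapsible triangulations of $Q_1$ and $Q_2$ and passing to a suitable common triangulation of $P$, while using that collapsibility \emph{is} preserved under barycentric subdivision, one arranges a triangulation $K = K_1 \cup K_2$ of $P$ for which both $K_1$ and $K_2$ are simplicially collapsible. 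Once this triangulation is in hand, the remainder is the direct application of Lemmas \ref{suspension} and \ref{SubofCollapsible} together with the suspension--collapse observation above.
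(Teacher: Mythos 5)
Your proposal is correct and follows essentially the same route as the paper: pass to a triangulation covered by two collapsible subcomplexes, apply Lemma \ref{suspension} to $3$-deform to $\Sigma(K_1\cap K_2)$, and then use Lemma \ref{SubofCollapsible} together with the fact that a collapse $A\co B$ induces $\Sigma A\co \Sigma B$ (the paper phrases this as $v_i(K_1\cap K_2)\co v_iG$ on each cone). The only difference is that you make explicit the common-triangulation issue, which the paper simply absorbs into the opening phrase ``take a triangulation $K$ of $P$ covered by collapsible subcomplexes.''
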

\begin{proof} Take a triangulation $K$ of $P$ covered by collapsible subcomplexes $K_1$, $K_2$. By Lemma \ref{suspension}, $K$ $3$-deforms to $\Sigma(K_1 \cap K_2) = v_1(K_1 \cap K_2) \cup v_2(K_1 \cap K_2)$ and by Lemma \ref{SubofCollapsible} $K_1 \cap K_2$ collapses to a $1$-dimensional subcomplex $G$. It follows that $v_i(K_1 \cap K_2) \co v_iG$ for $i=1,2$, and hence $K$ $3$-deforms to the suspension of $G$.
\end{proof}

\begin{rem}\label{AC} As a consequence of Proposition \ref{plgcat2}, the Andrews-Curtis conjecture is satisfied by contractible polyhedra which admit a cover by two PL collapsible subpolyhedra. Indeed, let $P$ be a contractible polyhedron covered by collapsible subpolyhedra $P_1$, $P_2$. From the Mayer Vietoris sequence, the intersection $P_1 \cap P_2$ has trivial homology and by Lemma \ref{SubofCollapsible}, $P_1 \cap P_2$ collapses to a tree. By Proposition \ref{plgcat2}, $P$ 3-deforms to a point. 
\end{rem}

As it was to be expected, the property of having PL geometric category 2 is not a (simple) homotopy invariant of a polyhedron. To illustrate this point, we invoke the classical example used by Fox \cite{Fox} to show that the geometric category is not a homotopy invariant. Let $P_1$ be the wedge sum of $S^2$ and two circles and let $P_2$ be the space obtained from $S^2$ by identifying three distinct points. Notice that $P_1$ and $P_2$ are simply homotopy equivalent (in fact, there is a $3$-deformation from $P_1$ to $P_2$). By splitting every sphere in $P_1$ in two, we see that $P_1$ admits a cover by two PL collapsible subpolyhedra and hence $\plgcat(P_1) = 2$. On the other hand, since $P_2$ does not admit covers by two contractible subpolyhedra by \cite[§39]{Fox}, $\plgcat(P_2) = 3$.
\par Thus, the global simple homotopy type is not enough to characterize 2-dimensional polyhedra of PL geometric category 2. A study of a more local nature is required. In this context a special class of polyhedra of dimension 2, which we proceed to describe, appears naturally.

\begin{defi}
Let $K$ be a simplicial complex of dimension $2$. We say that an edge of $K$ is \textit{inner} if it is a face of exactly two $2$-simplices of $K$.
\end{defi}

Recall that a simplicial complex $K$ of dimension $n$ is \textit{homogeneous} or \textit{pure} if all of its maximal simplices have dimension $n$.

\begin{defi}
Let $K$ be a homogeneous $2$-dimensional simplicial complex. We say that $K$ is \textit{inner-connected} if any pair of $2$-simplices $\sigma, \tau$ of $K$ is connected by a sequence of 2-simplices $\sigma = \eta_1, \eta_2, \dots, \eta_r = \tau$ such that $\eta_i \cap \eta_{i+1}$ is an inner edge of $K$ for each $1 \leq i < r$. We call such a sequence an \textit{inner sequence}. We say that a polyhedron $P$ is inner-connected if one (=all) of its triangulations is inner-connected.
\end{defi}

Recall that $K$ is strongly connected if it is homogeneous and for every pair of 2-simplices $\sigma, \tau$ of $K$ there is a sequence of $2$-simplices of $K$ $\sigma_1 = \sigma, \sigma_2, \dots, \sigma_n = \tau$ such that $\sigma_i \cap \sigma_{i+1}$ is an edge of $K$ for each $i$. Obviously, an inner-connected complex of dimension 2 is strongly connected. A strongly connected complex of dimension 2 is a \textit{pseudosurface} if each of its edges is a face of at most two 2-simplices .

\begin{example}
Surfaces or more generally pseudosurfaces are inner-connected. The presentation complex associated to finite one-relator presentation in which every generator appears at least once in the relator is also inner-connected.
\end{example}

Consider a 2-dimensional polyhedron $P$ which is the union of two collapsible subpolyhedra $P_1$, $P_2$. We know by Lemma \ref{SubofCollapsible} that the intersection $P_1 \cap P_2$ collapses to a graph. The main reason why inner-connected polyhedra are useful as a technical tool is the following: if $P$ is assumed to be inner-connected, it is possible to deform $P_1$ and $P_2$ so that $P_1 \cap P_2$ \textit{is} a graph.

\begin{lemma}\label{GraphInt}
Let $K$ be an inner-connected and non collapsible simplicial complex of dimension 2. Suppose that $K$ is the union of collapsible subcomplexes $K_1$, $K_2$. Then there exist collapsible subcomplexes $L_1$, $L_2$ such that $K = L_1 \cup L_2$ and $L_1 \cap L_2$ is 1-dimensional.
\end{lemma}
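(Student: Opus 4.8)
The plan is to reduce the statement to a combinatorial choice on the $2$-simplices shared by $K_1$ and $K_2$, and then to realize that choice by honest collapses. Write $J = K_1 \cap K_2$ and split the $2$-simplices of $K$ into three classes: those lying only in $K_1$, those lying only in $K_2$, and the shared ones, which are exactly the $2$-simplices of $J$. By Lemma~\ref{SubofCollapsible}, $J \co G$ for some graph $G$, so I may fix a collapse that removes the shared $2$-simplices one at a time, say $\sigma_1, \dots, \sigma_m$, each through a free edge $e_i$ (free in the complex that remains at the stage where $\sigma_i$ is removed). I will construct $L_1, L_2$ by deleting each $\sigma_i$, together with one of its edges, from exactly one of the two subcomplexes. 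If every shared $2$-simplex is removed from some side, then $L_1 \cap L_2$ contains no $2$-simplex, while the one-sided $2$-simplices never meet; hence $L_1 \cap L_2$ is automatically $1$-dimensional. The two things I must control are that $L_1 \cup L_2$ is still all of $K$ and that each $L_j$ is still collapsible.

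Covering is what dictates which edge to delete along with $\sigma_i$, and this is where inner-connectedness enters. Note first that one cannot simply delete all shared open $2$-simplices from $K_2$: deleting an open $2$-cell from the contractible complex $K_2$ creates homology in degree $1$, so to keep the side collapsible one is forced to remove an edge together with each $2$-cell. Consider then the free edge $e_i$ of $\sigma_i$. In the favorable case that $e_i$ is an inner edge of $K$ whose partner $2$-simplex $\rho_i$ lies outside $J$, say in $K_1 \setminus K_2$, the edge $e_i$ is a free face of $\sigma_i$ in $K_2$, and I delete the pair $(\sigma_i, e_i)$ from the $K_2$-side. Because $\rho_i \in K_1$ keeps $e_i$ alive in $K_1$, the deleted edge is recovered on the other side and $L_1 \cup L_2 = K$ is preserved. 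Inner-connectedness is precisely the hypothesis that lets me link each shared $2$-simplex, via inner edges, to the one-sided regions, and so decide consistently from which side to remove it.

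The step I expect to be the real obstacle is collapsibility. It is \emph{not} true that removing a free pair from a collapsible complex leaves a collapsible complex: collapses are not confluent (for instance $B \times I \co B$ for $B$ the house with two rooms, where $B \times I$ is collapsible but $B$ is not). So I cannot delete the chosen pairs and simply declare the result collapsible. My intended fix is to realize $L_1$ and $L_2$ as intermediate stages of genuine collapses $K_j \co *$ to a vertex: processing the shared $2$-simplices in the order $\sigma_1, \dots, \sigma_m$ supplied by the collapse of $J$, I remove each assigned pair as the next elementary collapse of the relevant $K_j$, and then continue collapsing $K_j$ to a point, so that the tail of the collapse exhibits $L_j \co *$. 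Making this precise requires showing that the chosen initial deletions really do extend to a full collapse of each $K_j$ (this is the non-confluence issue, to be handled by arranging the deletions as an initial segment of a collapse of $K_j$) and that the orders used on the two sides are compatible.

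The genuinely delicate subcase, and the one on which the argument will hinge, is when the free edge $e_i$ has degree at least three in $K$ with its partner $2$-simplices split between $K_1 \setminus K_2$ and $K_2 \setminus K_1$; such high-degree edges do occur (for example in the complexes of one-relator presentations). Then $e_i$ is free on neither side, a single edge does not suffice, and the side-assignment cannot be read off locally from $e_i$ alone. Here I would use inner-connectedness globally, to produce a consistent, acyclic assignment of the shared $2$-simplices to the two sides --- equivalently, to choose for each $\sigma_i$ a suitable free face making the deletions extend to collapses of $K_1$ and $K_2$ --- and I expect the bulk of the work to lie in verifying that this global assignment keeps both $L_1$ and $L_2$ collapsible.
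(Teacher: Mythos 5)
Your proposal has the right overall shape --- peel the shared $2$-simplices off one side at a time through free edges --- but both of the points you flag as ``to be handled'' are the actual content of the lemma, and the route you sketch for them is not the one that works. The first missing idea is where the free edge comes from. You take the free edges $e_i$ supplied by a collapse of $J=K_1\cap K_2$ onto a graph; these are free in $J$, not in $K_1$ or $K_2$, which is why you then run into the ``delicate subcase'' of edges of degree at least three and feel forced into a global, consistent side-assignment. That subcase never needs to be confronted. Since $K$ is not collapsible, $K_1\cap K_2$ is a \emph{proper} subcomplex of $K$; so as long as it still contains a $2$-simplex, inner-connectedness provides an inner sequence from a shared $2$-simplex to a non-shared one, and along it there is an \emph{inner} edge $e=\sigma\cap\tau$ with $\tau\in K_1\cap K_2$ and (say) $\sigma\in K_1\setminus K_2$. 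Because $e$ is inner it has exactly two cofaces in all of $K$, so $\tau$ is the unique simplex of $K_2$ properly containing $e$: the edge $e$ is already a free face of $K_2$, with no choice of ``which edge to delete'' and no global assignment required. Removing the pair $\{e,\tau\}$ from $K_2$ keeps the union equal to $K$ (both $e$ and $\tau$ lie in $K_1$) and reduces the number of shared $2$-simplices by one; one then simply repeats, finding a fresh inner edge at each stage, rather than fixing an order on the shared simplices in advance.

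The second gap is the collapsibility of the modified side, which you correctly observe is not automatic but leave unresolved. In dimension $2$ it is resolved by Lemma~\ref{SubofCollapsible}: the complex $\tilde K_2=K_2\setminus\{e,\tau\}$ is a subcomplex of the collapsible $2$-complex $K_2$, hence collapses to a graph; it is also contractible, being obtained from $K_2$ by an elementary collapse; and a contractible complex that collapses to a graph collapses to a tree, hence is collapsible. So the non-confluence phenomenon you cite (which is genuine from dimension $3$ on, e.g.\ $B\times I\searrow B$) cannot occur here, and there is no need to realize $L_1,L_2$ as initial segments of full collapses of $K_1,K_2$ with ``compatible orders.'' As written, your argument is a plan with its two central steps unverified; once these two inputs are in place, the machinery of ordering the shared simplices by a collapse of $J$ and assigning sides globally becomes unnecessary.
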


\begin{proof}
Suppose $K_1 \cap K_2$ has at least one $2$-simplex $\eta$. Since $K_1 \cap K_2$ is a proper subcomplex of $K$, we can find a $2$-simplex not in $K_1 \cap K_2$ and an inner sequence joining it to $\eta$. Then there are $2$-simplices $\sigma, \tau$ together with an inner edge $e = \sigma \cap \tau$ such that $\tau \in K_1 \cap K_2$ but $\sigma \not\in K_1 \cap K_2$. Without loss of generality, suppose $\sigma \in K_1$. Then $e$ is a free face of the complex $K_2$, which implies that we can remove $\tau$ from $K_2$. That is, the complexes $K_1$ and $\tilde{K}_2 = K_2 \setminus \tau$ form again a collapsible cover of $K$ and $K_1 \cap \tilde{K}_2$ has one fewer $2$-simplex than $K_1 \cap K_2$. It follows by induction that it is possible to find collapsible subcomplexes $L_1, L_2$ that cover $K$ and intersect in a graph.
\end{proof}

Even if a polyhedron $P$ admits covers by two PL collapsible subpolyhedra intersecting in a graph, the possible structure of these intersection graphs is constrained by the local topology of $P$. More concretely, we show that the topology of small neighborhoods around a point that is a leaf of an intersection graph satisfies certain condition.

\begin{defi} \cite{Hac}
Let $K$ be a simplicial complex. A vertex $v$ of $K$ is a \textit{bridge} if $K \setminus v$ has more connected components than $K$.
We say that $v$ is \textit{splittable} if the link $\lk_{K}(v)$ has bridges. Note that it makes sense to say that a point in a polyhedron is splittable because this property depends only on the homeomorphism type of a small closed neighborhood around the point and not on a specific triangulation of the space.
\end{defi}

The statement and proof of the following lemma are based on results from \cite{Bar11, Hac}.

\begin{lemma}\label{IrrVert}
Let $K$ be a homogeneous complex of dimension 2 which admits a collapsible cover of size two. Suppose additionally that the link of every non splittable vertex of $K$ is connected. Then, there exist collapsible subcomplexes $L_1$, $L_2$ that cover $K$ and such that every leaf of the 1-skeleton $(L_1 \cap L_2)^{(1)}$ of $L_1 \cap L_2$ is a splittable vertex of $K$.
\end{lemma}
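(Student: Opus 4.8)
The plan is to start from some collapsible cover $K = L_1 \cup L_2$ (one exists by hypothesis) and to improve it by a sequence of purely local moves until no leaf of $(L_1\cap L_2)^{(1)}$ is a non-splittable vertex. To organize this I would attach to each collapsible cover a complexity — say the number of leaves of $(L_1\cap L_2)^{(1)}$ that are non-splittable vertices of $K$, refined lexicographically by the number of $1$-simplices of $L_1\cap L_2$ — and prove the lemma by showing that a cover carrying a non-splittable leaf can never be of minimal complexity. Thus the whole argument reduces to one claim: if $v$ is a leaf of $(L_1\cap L_2)^{(1)}$ and $v$ is not splittable, then there is another collapsible cover of $K$ of strictly smaller complexity.

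The first step is a local analysis at such a $v$. Being a leaf of the $1$-skeleton, $v$ lies in exactly one edge $e=vw$ of $L_1\cap L_2$ and in no $2$-simplex of $L_1\cap L_2$, so $\lk_{L_1\cap L_2}(v)$ is the single vertex $w$. Since $L_1\cup L_2=K$, this yields a decomposition $\lk_K(v)=\lk_{L_1}(v)\cup\lk_{L_2}(v)$ in which the two pieces meet exactly in $w$. As $v$ is not splittable, the standing hypothesis makes $\lk_K(v)$ connected, and non-splittability means $\lk_K(v)$ has no bridge. I would then split into two cases according to whether one of the links $\lk_{L_i}(v)$ is reduced to the vertex $w$.

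In the first case, say $\lk_{L_2}(v)=\{w\}$, the edge $e$ is the only maximal simplex of $L_2$ containing $v$, so $v$ is a free face of $e$ and $L_2 \ce L_2\setminus\{v,e\}$; since $v,e\in L_1$, the pair $(L_1,\,L_2\setminus\{v,e\})$ is still a collapsible cover and $v$ has left the intersection, strictly lowering the number of intersection edges. In the remaining case both links are nontrivial, so $w$ is a cut vertex of the connected, bridgeless graph $\lk_K(v)$, and bridgelessness forces at least two edges of $\lk_K(v)$ from $w$ into each side, i.e. $e$ lies in at least two $2$-simplices of each $L_i$. Here I would transfer a suitable $2$-simplex $\sigma$ of $L_1$ incident to $v$ across to $L_2$: because $\sigma\notin L_2$, one of its edges at $v$ becomes a new edge of $L_1\cap L_2$ and $v$ ceases to be a leaf. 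The aim is to choose $\sigma$ so that the simplices newly added to $L_2$ form an expansion, i.e. $L_2\cup\sigma \co L_2$, keeping $L_2$ collapsible; the combinatorics of the bridgeless link of $v$, together with the link-and-bridge results of \cite{Bar11,Hac}, is meant to guarantee that such a $\sigma$ exists.

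The main obstacle is exactly this second case. Transferring $\sigma$ alters $L_2$, and whether $L_2\cup\sigma$ stays collapsible depends on how the new edges meet the part of $L_2$ far from $v$: if the third vertex of $\sigma$ already lies in $L_2$ while the corresponding new edge does not, the transfer can create a cycle and destroy collapsibility. Controlling this — arguing that the connected, bridgeless structure of $\lk_K(v)$ always supplies a transfer that is a genuine expansion, and at the same time that the move creates no new non-splittable leaves, so that the complexity strictly drops and the process terminates — is the crux of the proof, and is where the machinery of \cite{Bar11,Hac} is genuinely needed. The easy case, by contrast, is immediate: a leaf sitting on a trivial side simply collapses away.
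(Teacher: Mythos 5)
Your proposal has a genuine gap, and it sits exactly where you locate ``the crux'': the second case is left unresolved, with the transfer-of-a-$2$-simplex argument only sketched and its key difficulty (preserving collapsibility of $L_2$) explicitly not controlled. But the real problem is that this entire branch is based on a misreading of the definitions. In this paper a \emph{bridge} of a graph is a \emph{vertex} whose removal increases the number of connected components (a cut vertex), and $v$ is \emph{splittable} when $\lk_K(v)$ has such a vertex. You write that in case 2 ``$w$ is a cut vertex of the connected, bridgeless graph $\lk_K(v)$'' --- under the paper's definition that sentence is already a contradiction, not the starting point for a further construction. Indeed, your own local analysis shows $\lk_K(v)=\lk_{L_1}(v)\cup\lk_{L_2}(v)$ with $\lk_{L_1}(v)\cap\lk_{L_2}(v)=\{w\}$; if both pieces properly contain $\{w\}$, then every edge of $\lk_K(v)\setminus w$ lies entirely in one piece, so $\lk_K(v)\setminus w$ is disconnected, $w$ is a bridge of $\lk_K(v)$, and $v$ is splittable --- contrary to assumption. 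So case 2 is vacuous and no $2$-simplex ever needs to be moved between $L_1$ and $L_2$; only your easy case (collapse the pendant free pair $(v,e)$ out of one subcomplex, dropping the number of intersection edges) actually occurs, and your induction then closes. You seem to have interpreted ``bridgeless'' as the absence of disconnecting \emph{edges}, which is what sends you into the unneeded and unfinished transfer argument.

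For comparison, the paper's proof splits on whether the edge $\eta=vw$ at the leaf is maximal in one of the two subcomplexes. When it is not maximal in either, non-splittability gives a path in $\lk_K(\text{leaf})$ avoiding the other endpoint, and that path must pass through a vertex common to both links, producing a second edge of $K_1\cap K_2$ at the leaf --- contradicting leafness. When $\eta$ is maximal in, say, $K_1$, a similar path argument (using homogeneity of $K$) shows the leaf has no other edge of $K_1$ hanging from it, so it is a free face and can be collapsed away. Your case split by links is arguably cleaner than the paper's, but as written the proof is incomplete: you must replace the speculative transfer machinery in case 2 by the one-line contradiction above (or by the paper's path argument).
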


\begin{proof}
Let $K_1$ and $K_2$ be subcomplexes of $K$ that form a collapsible cover of $K$. Take $\eta = vw \in (K_1 \cap K_2)^{(1)}$ an edge such that $w$ is a leaf, i.e. $\lk_{(K_1 \cap K_2)^{(1)}}(w) = v$, but not a splittable vertex. Suppose in first place that $\eta$ is not maximal in either of the subcomplexes $K_1, K_2$, so that there exist vertices $v_i \in K_i$ with $vwv_i \in K_i$ for $i = 1,2$. As $w$ is not a splittable vertex, we can find a path joining $v_1$ and $v_2$ in $\lk_K(w)\setminus v$. But then there must be at least another edge in $\lk_{K_1 \cap K_2}(w)$ contradicting the hypothesis that $\eta$ is a leaf of $(K_1 \cap K_2)^{(1)}$. Suppose now $\eta$ is maximal in $K_1$ and take $\tau = v_ 2\eta$ a $2$-simplex of $K_2$ containing $\eta$ (we can find one by homogeneity of $K$). We show that in this case $K_1$ collapses to $K_1 \setminus w$. If it was not the case, there should be another edge $\eta' \in K_1$ hanging from $w$. By the homogeneity of $K$, $\eta'$ is the face of some 2-simplex $\sigma = v_1 \eta'$ which per force is in $K_1$ but not in $K_2$. Since by hypothesis $w$ is not splittable and has connected link, there is a path in $\lk_K(w) \setminus v$ joining $v_1$ to $v_2$ and so $w$ cannot be a leaf of $C$, a contradiction. By performing the collapses that correspond to edges in the second case, we may assume the the leaves of $(K_1 \cap K_2)^{(1)}$ are splittable vertices.
\end{proof}

Consider again a 2-dimensional polyhedron $P$ covered by collapsible subpolyhedra $P_1$, $P_2$. A straightforward computation using the (reduced) Mayer-Vietoris long sequence reveals that
\[
 \tilde{H}_0(P_1 \cap P_2) \equiv H_1(P) \text{, }H_1(P_1 \cap P_2) \equiv H_2(P),
\]
where the homology groups are taken with coefficients in $\ZZ$. From Proposition \ref{plgcat2}, we know that $H_1(P)$ and $H_2(P)$ are finitely generated free abelian groups. Suppose that $\rk H_2(P) < \rk H_1(P)$. Since by Lemma \ref{SubofCollapsible} the polyhedron $P_1 \cap P_2$ collapses to a graph, at least two connected components of $P_1 \cap P_2$ are collapsible (because at least two of them are acyclic). When these components are graphs (for example, this is the case if $P$ is inner-connected), by Lemma \ref{IrrVert} its leaves should be located in splittable vertices or vertices with non connected links. Thus, $P$ should have at least two such vertices. The conclusion reached in this paragraph is roughly that an inner-connected polyhedron which is regular both in a local and a global sense does not admit PL collapsible covers of size two.

\begin{theo} \label{No2Theo}
Let $P$ be an inner-connected polyhedron of dimension $2$ such that $H_2(P) \equiv 0$ or $\rk H_2(P) < \rk H_1(P)$. Suppose additionally that $P$ is not PL collapsible, has at most one splittable vertex and that the link of every non splittable vertex is connected. Then $\plgcat(P) = 3$.
\end{theo}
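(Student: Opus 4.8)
The plan is to argue by contradiction, using the preceding lemmas to normalize a hypothetical size-two collapsible cover and then counting splittable vertices. Since $P$ is not PL collapsible we have $\plgcat(P) \geq 2$, and since $\dim P = 2$ Proposition \ref{plgcatDimN} gives $\plgcat(P) \leq 3$; so it suffices to rule out $\plgcat(P) = 2$. Suppose then that some triangulation $K$ of $P$ admits a collapsible cover $K = K_1 \cup K_2$. Because $P$ is inner-connected (hence $K$ is homogeneous), Lemma \ref{GraphInt} lets me replace this cover by one whose intersection $G := K_1 \cap K_2$ is $1$-dimensional, and then Lemma \ref{IrrVert}---whose hypotheses (homogeneity, a collapsible cover of size two, connected links at non-splittable vertices) are exactly what is assumed---lets me further arrange that every leaf of $G$ is a splittable vertex of $K$. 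I would first check that these two normalizations are compatible: the collapses in the proof of Lemma \ref{IrrVert} only delete leaf edges from the intersection, so starting from a $1$-dimensional $G$ keeps $G$ one-dimensional.

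Next I would extract numerical information via Mayer--Vietoris. As recorded before the statement, $\widetilde H_0(G) \cong H_1(P)$ and $H_1(G) \cong H_2(P)$, and by Proposition \ref{plgcat2} these are free abelian. Thus $G$ has $c = \rk H_1(P) + 1$ connected components and first Betti number $b_1(G) = \rk H_2(P)$. Since every component with positive first Betti number contributes at least $1$ to $b_1(G)$, the number $t$ of \emph{acyclic} (tree) components satisfies $t \geq c - b_1(G) = \rk H_1(P) + 1 - \rk H_2(P)$. Under either hypothesis this forces $t \geq 2$, with the single exception of the degenerate case $H_1(P) = H_2(P) = 0$, where $G$ is a single tree.

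Now I convert tree components into splittable vertices. Each tree component $T$ of $G$ yields at least one splittable vertex of $K$: if $T$ has an edge it has at least two leaves, which are splittable by the arrangement above; if $T = \{u\}$ is a single vertex then $u$ is isolated in $G$, so $\lk_G(u) = \emptyset$ and $\lk_K(u) = \lk_{K_1}(u) \sqcup \lk_{K_2}(u)$ is a disjoint union of two nonempty subcomplexes (each $K_i$ is connected with at least one edge, since a single-vertex $K_i$ would force $K$ equal to the other, collapsible, piece by homogeneity). Hence $u$ has disconnected link and is splittable by the hypothesis on non-splittable vertices. Vertices arising from distinct components are distinct, so $t \geq 2$ produces two distinct splittable points of $P$, contradicting the assumption that $P$ has at most one. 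The remaining degenerate case is handled separately: if $G$ is a single tree with an edge its two leaves are again splittable, a contradiction; and if $G$ is a single vertex $v$, then every inner edge of $K$ lies in exactly one $K_i$ together with both of its adjacent $2$-simplices, so inner sequences cannot pass between $K_1$ and $K_2$. Inner-connectedness then forces all $2$-simplices into a single $K_i$, and homogeneity gives $K = K_i$, which is collapsible---contradicting that $P$ is not PL collapsible. In every case we reach a contradiction, so $\plgcat(P) \neq 2$ and therefore $\plgcat(P) = 3$.

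I expect the main obstacle to lie in the careful treatment of the low-rank degenerate cases rather than in the counting itself: one must ensure that Lemmas \ref{GraphInt} and \ref{IrrVert} can be invoked simultaneously, that single-vertex components genuinely produce splittable vertices (which is precisely where the disjoint decomposition of the link and the hypothesis on non-splittable vertices are used), and that the one-point intersection is excluded by inner-connectedness rather than by a spurious claim that a collapsible complex collapses to a prescribed vertex.
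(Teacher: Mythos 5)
Your proof is correct and follows essentially the same route as the paper's: normalize the cover via Lemmas \ref{GraphInt} and \ref{IrrVert}, count acyclic components of the intersection through the Mayer--Vietoris identifications, convert them into splittable vertices, and dispose of the contractible case by showing a one-point intersection contradicts inner-connectedness. If anything, you are more careful than the paper, which tacitly treats every acyclic component as having leaves; your observation that an isolated vertex of $L_1 \cap L_2$ has disconnected link (hence is splittable under the stated hypotheses) fills that small gap.
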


\begin{proof}
The case $\rk H_2(P) < \rk H_1(P)$ was already treated in the paragraph above. Suppose then $H_2(P) \equiv H_1(P) \equiv 0$ and that $P$ is the union of PL collapsible subpolyhedra $P_1$, $P_2$ that intersect in a graph. Hence, $P_1 \cap P_2$ is a tree and since we may assume by Lemma \ref{IrrVert} that its leaves are located in splittable vertices, $P_1 \cap P_2$ should be a point. It follows that $P$ is a wedge sum of PL collapsible polyhedra, which contradicts the hypothesis that $P$ be inner-connected.
\end{proof}

\begin{example} The dunce hat $D$ is an inner-connected contractible polyhedron with only one splittable vertex and such that every other vertex has connected link. Hence, by Theorem \ref{No2Theo} no triangulation of $D$ admits a cover by two collapsible subcomplexes. In fact, we can say a little more. The dunce hat $D$ can be viewed as the presentation complex associated to the one-relator presentation $\langle \, a \,|\, aaa^{-1} \,\rangle$ (see the first paragraph of Section \ref{onerel}). More generally, by Theorem \ref{No2Theo} none of the presentation complexes associated to a presentation of the form $\langle \, a \,|\, a^{n}a^{-(n-1)} \,\rangle$ $(n \geq 2)$ admits a cover by two PL collapsible subpolyhedra.
\end{example}

\begin{example} The standard Bing's house with two rooms admits a PL collapsible cover of size two (to see this, split the complex in two halves, each one containing the walls which support the vertical tunnels). However, as a consequence of the proof of Theorem \ref{No2Theo} it is impossible to cover this polyhedron by two PL collapsible subpolyhedra intersecting in a graph.
\end{example}

\section{The geometry of one-relator presentations} \label{onerel}

We use the results of the previous section to provide a complete characterization of one-relator presentation complexes that admit a PL collapsible cover of size two. 
\par Recall that associated to a finite presentation ${\mathcal P} = \langle \, X \,|\, R \,\rangle $ there is a topological model built as follows. Let $K = \vee_{x \in X} S_{x}^{1}$ be a wedge sum of $1$-spheres indexed by $X$. Every word $r \in R$ spells out a combinatorial loop on the space $K$ based on the wedge point, which is used to attach a $2$-cell on $K$. The resulting $2$-dimensional \rm{CW}-complex is called the \textit{presentation complex} of ${\mathcal P}$ and is denoted by $K_{\mathcal P}$. Since the attaching maps are combinatorial, the presentation complex $K_{\mathcal P}$ is a polyhedron (see \cite[Chapter 2]{HAM} for more details). When the set $R$ consists of only one word $r$ the presentation $ \langle \, X \,|\, r \,\rangle $ is called a \textit{one-relator presentation}.
\par In what follows, we will assume that the one-relator presentation complexes are homogeneous, that is, every generator appears in the relator. There is no loss of generality in this assumption. Indeed, if it was not the case, the associated complex $K_{\mathcal P}$ would decompose as a wedge sum of a bouquet of 1-spheres and a homogeneous one-relator complex $K_{\mathcal Q}$. It is easy to see then that to compute $\plgcat(K_{\mathcal P})$, it is enough to compute $\plgcat(K_{\mathcal Q})$.

\begin{prop}\label{StripProp}
Let ${\mathcal P} = \langle \, x_1, \dots, x_k \,| \, r \, \rangle$ be a finite one-relator presentation and suppose that $r$ admits an algebraic collapse, that is, there is a generator $x$ which occurs only once in $r$ with exponent $\pm 1$. Then $K_{ {\mathcal P} }$ admits a cover by two PL collapsible subpolyhedra, that is, $\plgcat(K_{ {\mathcal P} }) \leq 2$.
\end{prop}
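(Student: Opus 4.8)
\medskip
\noindent\textbf{Proof plan.} The plan is to use the hypothesis to collapse $K_{\mathcal P}$ onto a wedge of circles, and then to transport a collapsible cover of size two back across this collapse. After cyclically permuting $r$ and possibly replacing $x$ by $x^{-1}$, I would write $r = x\,w$ with $w$ a word in the generators other than $x$ (this is possible precisely because $x$ occurs exactly once). In $K_{\mathcal P}$ the edge labelled $x$ is then traversed a single time by the attaching map of the unique $2$-cell, so after a suitable subdivision it becomes a free face; collapsing it removes both the $2$-cell and the $x$-edge and leaves the subcomplex $G=\bigvee_{x_i\neq x}S^1_{x_i}$. Since the presentation is homogeneous, every remaining generator really occurs in $w$, so $G$ is a wedge of $k-1$ circles and we obtain a PL collapse $K_{\mathcal P}\co G$.

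Next I would invoke the base case of Proposition~\ref{plgcatDimN}: a graph admits a cover by two strong collapsible (hence, by Remark~\ref{strongrem}, collapsible) subcomplexes. Concretely, subdividing and cutting each circle of $G$ at its midpoint into a ``first half'' and a ``second half'', one lets $G_1$ consist of the wedge point together with all first halves and $G_2$ of the wedge point together with all second halves; then $G_1,G_2$ are collapsible and $G=G_1\cup G_2$.

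The heart of the argument is to lift this decomposition through the collapse $c\colon K_{\mathcal P}\to G$. Choosing a triangulation for which $c$ is simplicial, I would set $K_1=c^{-1}(G_1)$ and $K_2=c^{-1}(G_2)$; these are subcomplexes with $K_1\cup K_2=c^{-1}(G)=K_{\mathcal P}$. Because $c$ is realized by a collapse, the standard collapsing homotopy $h_t$ satisfies $c\circ h_t=c$, so each $K_i$ is $h_t$-invariant and $h_t$ restricts to a deformation retraction of $K_i$ onto $G_i$; in particular each $K_i$ is contractible. Geometrically this corresponds to slicing the single $2$-cell into two ``strips'' along the collapse trajectories that terminate at the midpoints of the circles, so that one strip collapses onto the first-half portion of $w$ and the other onto the second-half portion, each together with the corresponding half of the $x$-edge.

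The main obstacle is the passage from contractibility to PL collapsibility of $K_1$ and $K_2$. Contractibility is automatic from the invariance argument, but it is strictly weaker than collapsibility (the dunce hat being the standard cautionary example), so I must arrange the collapse $K_{\mathcal P}\co G$ so that it restricts to honest PL collapses $K_i\co G_i$; since $G_i$ is collapsible, $K_i$ would then be collapsible as well. The delicate point is to subdivide the $2$-cell compatibly with the first-half/second-half partition of the boundary word, especially when a generator occurs several times in $w$: its circle is cut at a single midpoint while contributing several arcs to the attaching map, and one must verify that the corresponding strips still collapse cleanly onto their respective halves, without leaving an uncontracted cycle behind. Producing this compatible strip decomposition of the $2$-cell, and checking that the two resulting subpolyhedra collapse onto $G_1$ and $G_2$, is the technical core of the proof.
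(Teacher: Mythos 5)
Your plan is in the right spirit---the proof really does come down to slicing the single $2$-cell into strips---but as written it stops exactly at the decisive point. You correctly observe that the invariance argument only yields that $K_1=c^{-1}(G_1)$ and $K_2=c^{-1}(G_2)$ are \emph{contractible}, and you then declare that arranging the subdivision so that each $K_i$ actually PL collapses onto $G_i$ is ``the technical core of the proof'' without carrying it out. That is a genuine gap: no explicit subdivision of the disk is produced, and no collapsing sequence for the two pieces is exhibited, so the statement is not proved. The difficulty is compounded by a self-imposed constraint: by insisting that the two pieces be full preimages of a two-piece cover of the quotient graph $G$, you force each $K_i$ to contain \emph{every} strip lying over a given half-circle, glued along that half-circle (one strip per occurrence of the generator in $w$), plus the preimage of the wedge point; verifying collapsibility of these rather complicated ``books'' is harder than the problem requires. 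Nothing in the definition of $\plgcat$ demands that the cover be compatible with the collapse $K_{\mathcal P}\co G$, nor even that the two pieces be connected at the outset.

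The paper's proof drops both constraints and becomes almost a picture. Write $r=x^{\pm1}a_1\cdots a_{m-1}$, view $K_{\mathcal P}$ as a disk with boundary word $r$, subdivide the $x$-edge into $2(m-1)+1$ sub-edges and every other edge into three, and run pairwise disjoint strips inside the disk from every \emph{other} sub-edge of $x$ to the central third of the corresponding $a_i$. Each strip collapses through its free sub-edge on $x$; after the boundary identifications, the strips over occurrences of the same generator merely get glued along one central edge, so $P_1$ (the union of strips) is a disjoint union of PL collapsible pieces, and so is its complement $P_2$. Since $K_{\mathcal P}$ is connected, each disjoint union can then be enlarged to a single PL collapsible subpolyhedron by joining its components with arcs---this last, elementary observation (already used in the proof of Proposition \ref{plgcatDimN} to absorb the stars of the barycenters) is the trick that lets one avoid entirely the question you got stuck on. If you want to salvage your approach, the fix is to replace ``preimage of $G_i$'' by an explicit strip decomposition of this kind and to make sure each strip carries a free face on the $x$-edge, which is what drives the collapse.
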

\begin{proof}
We may assume that $x = x_1$ and $r = x^{\pm 1} a_1 \dots a_{m-1}$, where each $a_i$ is equal to some $x_j^{\pm1}$, $j \neq 1$. Picture the complex $K_{\mathcal P}$ as a disk with the boundary subdivided in $m$ edges labeled in counterclockwise order according to $r$. Subdivide the edge labeled $x$ in $2(m-1) + 1$ edges and subdivide the rest of the edges in three edges. Join the $i$-th edge of the subdivided $x$ to the central edge of (the edge labeled as) $a_i$ by a 2-dimensional strip inside the disk in such a way that the strips are pairwise disjoint (see Figure \ref{stripfig}). 

\begin{figure}[!htpb]
\centering
%\begin{center}
\includegraphics[width=0.6\textwidth]{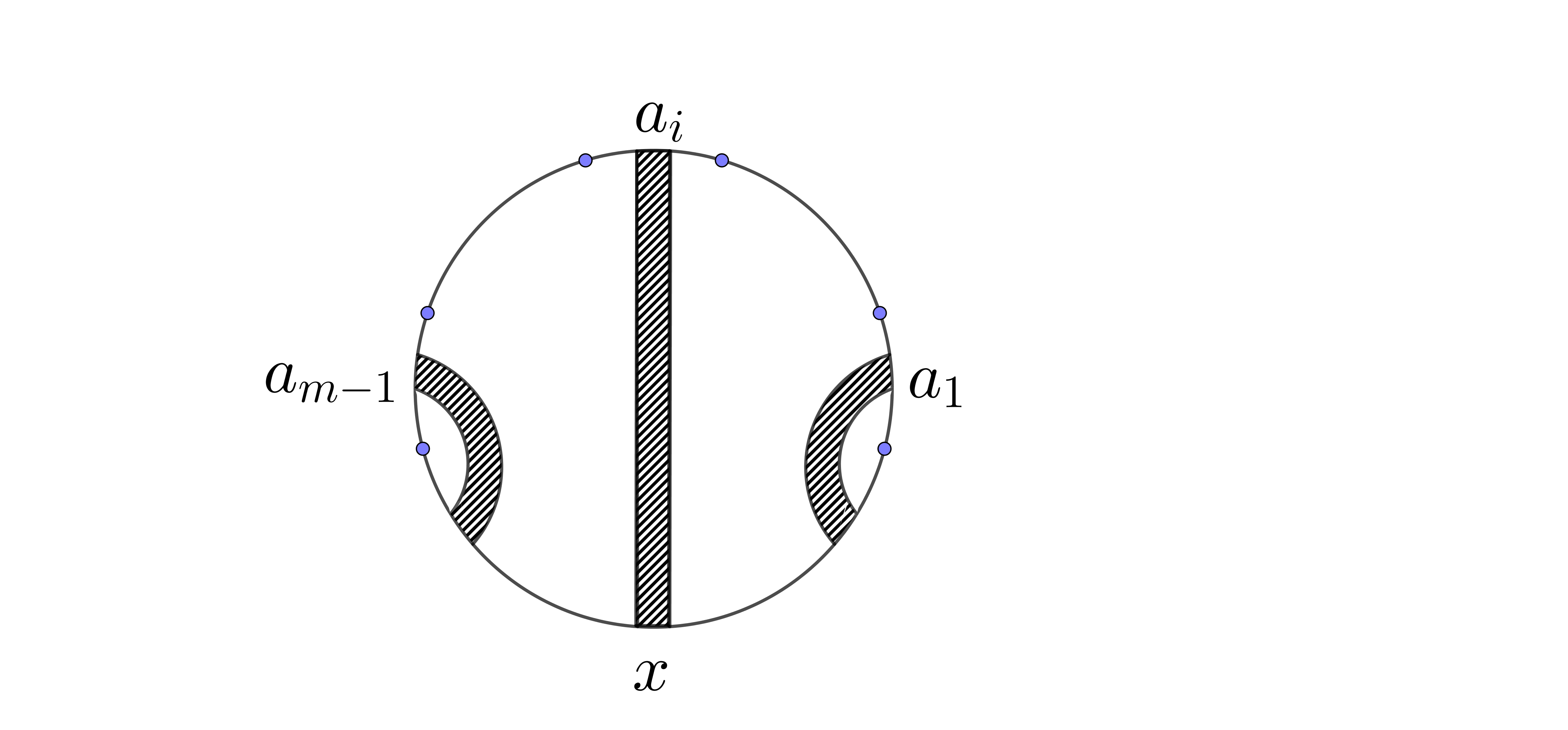}
\caption{The strips (shaded) PL collapse to a tree through the edge which intersects the edge labeled $x$.}\label{stripfig}
%\end{center}
\end{figure}

Both the subpolyhedron $P_1$ formed by the union of these strips and its complement $P_2$ consist of a disjoint union of PL collapsible polyhedra. Hence, $P_1$ and $P_2$ may be included in PL collapsible polyhedra $Q_1$ and $Q_2$ that cover $K_{\mathcal P}$.
\end{proof}

The presentation complex of a (homogeneous) one-relator presentation is an inner-connected polyhedron and each of its points has a connected link, except possibly the wedge point. Moreover, if such a complex admits no algebraic collapses only the wedge point may be splittable. It is a consequence of Theorem \ref{No2Theo} that most such complexes do not admit PL collapsible covers of size two.

\begin{prop}\label{ExcepProp}
Let ${\mathcal P} = \langle x_1, \dots, x_k \, \vert \, r \rangle$ be a finite one-relator presentation such that $r$ does not admit algebraic collapses. Then $\plgcat(K_{ {\mathcal P} }) = 2$ if and only ${\mathcal P}$ is of the form $\langle \, x \, |  \, (xx^{-1})^{\pm 1} \, \rangle$.
\end{prop}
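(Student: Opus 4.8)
The plan is to prove the two implications separately, reserving the genuinely delicate point for the borderline homological case at the very end.

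\emph{Sufficiency.} First I would identify the polyhedron $K_{\mathcal P}$ for $\mathcal P = \langle\, x \mid xx^{-1}\,\rangle$ concretely: realizing the relator as a $2$-gon with its two boundary edges identified (respecting orientation), one sees that $K_{\mathcal P}$ is built from a single $2$-cell, has $\pi_1 \cong \ZZ$ and $\chi = 1$, and is in fact PL homeomorphic to the sphere $S^2$ with two points $N$, $S$ identified. Writing $q\colon S^2 \to S^2/(N\sim S) = K_{\mathcal P}$ for the quotient map, I would cover $K_{\mathcal P}$ by the images $P_1 = q(U)$, $P_2 = q(L)$ of the closed upper and lower hemispheres. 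Since $N \in U$ and $S\in L$ are the only identified points, each of $P_1,P_2$ is PL homeomorphic to a disk, hence PL collapsible, and they evidently cover $K_{\mathcal P}$; note also that $P_1\cap P_2$ is the equator together with the image of $N=S$, already matching the homological constraints used below. As $K_{\mathcal P}$ is not contractible it is not PL collapsible, whence $\plgcat(K_{\mathcal P}) = 2$. The case $r = x^{-1}x$ is identical by symmetry.

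\emph{Necessity: reduction.} Suppose $\mathcal P = \langle\, x_1,\dots,x_k \mid r\,\rangle$ admits no algebraic collapse and $\plgcat(K_{\mathcal P}) = 2$. As $\plgcat(K_{\mathcal P})=2>1$, the complex $K_{\mathcal P}$ is not PL collapsible. By the discussion preceding Theorem \ref{No2Theo}, $K_{\mathcal P}$ is inner-connected, has at most one splittable vertex (the wedge point), and every non-splittable vertex has connected link. Thus the only hypothesis of Theorem \ref{No2Theo} that can fail is the homological one; since $\plgcat(K_{\mathcal P})=2$ rules out the conclusion $\plgcat=3$, that homological hypothesis \emph{must} fail. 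I would compute $H_*(K_{\mathcal P};\ZZ)$ from the cellular chain complex $0\to \ZZ \xrightarrow{\partial_2}\ZZ^{k}\xrightarrow{0}\ZZ\to 0$, where $\partial_2$ is the vector of exponent sums $(e_1,\dots,e_k)$ of $r$: if some $e_j\neq 0$ then $H_2 = 0$, while if all $e_j = 0$ then $H_2\cong\ZZ$ and $H_1\cong\ZZ^{k}$. Hence the failure of ``$H_2\equiv 0$ or $\rk H_2<\rk H_1$'' forces all exponent sums to vanish \emph{and} $k=1$, the unique case where $\rk H_2 = \rk H_1 = 1$. So $k=1$ and $r$ is a word in $x^{\pm 1}$ of even length $2m$ with zero exponent sum.

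\emph{Necessity: the borderline case.} It remains to show $m = 1$. Taking a collapsible cover $K_1,K_2$ and invoking Lemma \ref{GraphInt} (valid since $K_{\mathcal P}$ is inner-connected and non-collapsible), I may assume $G := K_1\cap K_2$ is a graph. The reduced Mayer--Vietoris sequence gives $\tilde H_0(G)\cong H_1(K_{\mathcal P})\cong\ZZ$ and $H_1(G)\cong H_2(K_{\mathcal P})\cong\ZZ$, so $G$ has exactly two connected components and first Betti number one. By Lemma \ref{IrrVert} I may further assume every leaf of $G$ lies at a splittable vertex; the wedge point is the only candidate, so $G$ has at most one leaf. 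Consequently the acyclic component of $G$ is a single vertex (a tree with at least two vertices has at least two distinct leaves), and the other component is unicyclic with cycle $\gamma$. Since $\gamma$ generates $H_2(K_{\mathcal P})$, pulling back along the quotient $q\colon D\to K_{\mathcal P}$ from the $2m$-gon $D$ (whose boundary is folded onto the loop $\ell$ according to $r$), the cycle $\gamma$ lifts to a nonempty system of disjoint arcs $\tilde\gamma\subseteq D$ with endpoints on $\partial D$ that separate $D$ into two regions $\tilde\Delta_1,\tilde\Delta_2$; the cover condition forces each image $q(\tilde\Delta_i)$ to be contractible. The problem thus reduces to the combinatorial question of when cutting the $2m$-gon and folding its boundary by $r$ produces two simply connected pieces, which I would show happens precisely for $m = 1$, i.e. for $r = (xx^{-1})^{\pm 1}$.

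The main obstacle is exactly this last combinatorial step, and what makes it subtle is that the naive numerical invariant is useless: a direct computation gives $\chi\bigl(q(\tilde\Delta_2)\bigr) = 1$ for every $m$, so the Euler characteristic cannot detect the difference and one is forced to analyze the fundamental group (or $H_1$) of the folded pieces directly from the cyclic word $r$. Two features require care. First, $\tilde\gamma$ genuinely meets $\partial D$: an interior separating curve would leave $G$ connected, contradicting $\tilde H_0(G)\cong\ZZ$, so the loop $\ell$ must be split between $K_1$ and $K_2$ exactly as in the sufficiency construction. Second, one must dispose of a possible ``hair'' attached to $\gamma$ whose tip is the wedge point. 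I expect the real work to lie in tracking how the fold pattern of $r$ identifies the boundary arcs of $\tilde\Delta_1$ and $\tilde\Delta_2$ and showing that a single adjacent cancelling pair ($m=1$) is the only pattern folding both pieces into simply connected sets.
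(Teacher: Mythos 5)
Your sufficiency argument and your reduction of necessity to the borderline case are sound and essentially coincide with the paper's: the paper also identifies $K_{\langle x \mid xx^{-1}\rangle}$ with $S^2$ with two points identified, also uses $\rk H_2 - \rk H_1 = 1-k$ together with Theorem \ref{No2Theo} to force $k=1$ and total exponent $0$, and also derives (via Lemmas \ref{GraphInt} and \ref{IrrVert} and Mayer--Vietoris) that $K_1\cap K_2$ may be taken to be a graph with exactly two components, one of which is a single point and the other a circle $C$.

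The genuine gap is the last step: you never actually prove that $m=1$. You reduce it to ``the combinatorial question of when cutting the $2m$-gon and folding its boundary by $r$ produces two simply connected pieces,'' and then state that you \emph{would} show this happens precisely for $m=1$ and that you \emph{expect} the real work to lie in tracking the fold pattern --- but that tracking is exactly the content of the claim, and it is not carried out. The paper closes this in two short concrete moves that your outline is missing. First, the isolated-point component of $K_1\cap K_2$ can only exist if the link of the wedge point is disconnected; reading this off the Whitehead graph of $r$ already pins down $r=(xx^{-1})^{\pm n}$, so the ``fold pattern'' is not arbitrary. Second, the circle $C$ must meet the loop $x$ in a nonempty finite set (otherwise $x$ lies in one of the collapsible pieces, which would then have nontrivial $H_1$), and at a point $v\in C\cap x$ the local structure of $K_{\mathcal P}$ --- $2n$ half-planes glued along their boundary lines --- forces $v$ to have valency $2n$ in $C$; since $C\cong S^1$ every vertex of $C$ has valency $2$, whence $n=1$. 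This local valency count is the decisive idea that replaces your unproved global claim about the folded regions $q(\tilde\Delta_i)$; without it (or an equivalent argument), the case $m\ge 2$ is not excluded and the proof is incomplete.
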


\begin{proof}
By cellular homology, the group $H_2(K_{ {\mathcal P} })$ is free abelian of rank at most 1. Moreover, by a straightforward Euler characteristic computation we know that
\[ 
\rk H_2(K_{ {\mathcal P} }) - \rk H_1( K_{ {\mathcal P} } ) = 1 - k.
\]
Hence, if ${\mathcal P}$ has $k > 1$ generators, we have $\rk H_1(K_{ {\mathcal P} }) < \rk H_2(K_{ {\mathcal P} })$ and the conclusion follows from Theorem \ref{No2Theo}. The case $H_2(P) \equiv 0$ is also covered by Theorem \ref{No2Theo}.
\par It remains then to analyze the case of one-relator presentations with one generator and non trivial second homology group. Those are exactly the presentations of the form $\langle \, x \,| \, r \, \rangle$, where $r$ is a word on letters $x$, $x^{-1}$ with total exponent $0$. Suppose that a triangulation of $K_{\mathcal P}$ admits a cover by collapsible subcomplexes $K_1$, $K_2$. We may assume that $K_1 \cap K_2$ is a graph with at most one leaf. Since $H_0(K_1 \cap K_2) \equiv \ZZ^{2}$ and $H_1(K_1 \cap K_2) \equiv \ZZ$, one of the connected components of $K_1 \cap K_2$ is acyclic and therefore consists of only one point. For this to be possible, the link of the wedge point must have more than one connected component. By drawing the Whitehead graph of $r$ (see \cite[Ch.6]{HAM}), we see that this is the case only for presentations of the form $\langle \, x \, | \, (xx^{-1})^{\pm n} \rangle$, $n \in \NN$. Call $C$ the other connected component of $K_1 \cap K_2$. Since it is a connected graph with one cycle and no leaves, $C$ is homeomorphic to $S^{1}$. Moreover, it is not difficult to show that the intersection of this component with the loop $x$ may by assumed to be 0-dimensional, that is, a finite set of points. Also, notice that the intersection $C \cap x$ is not empty. Indeed, suppose that the loop $x$ is entirely contained in $K_1$ (the argument for $K_2$ is identical). Since the homology class determined by $x$ is a generator of the first homology group $H_1(K_{ {\mathcal P} })$, $K_1$ does not have trivial $H_1$, a contradiction. Let then $v$ be a point in $C \cap x$ and let $a$, $b$ the edges of the subdivision of $x$ that contain $v$. Since the intersection of $C$ with the loop $x$ is 0-dimensional, we may assume that $a \in K_1 \setminus K_2$ and $b \in K_2 \setminus K_1$. The edges $a$, $b$ are faces of $2n$ 2-simplices in any triangulation of $K_{ {\mathcal P} }$. Furthermore, the (open) star of $v$ is homeomorphic to a union of $2n$ half euclidean planes with the $x$ axis identified. It follows that vertex $v$ has valency $2n$ in the graph $C$. This is impossible unless $n = 1$. Finally, observe that the complex associated to a presentation of the form $\langle \, x \, |  \, (xx^{-1})^{\pm 1} \, \rangle$ is homeomorphic to a 2-sphere with its poles identified and so admits a cover by two PL collapsible subpolyhedra.
\end{proof}

As a corollary to Propositions \ref{StripProp} and \ref{ExcepProp} we obtain the announced characterization, which shows that the property of admitting a PL collapsible cover of size two is very restrictive for this class.

\begin{theo}\label{plgcatonerel}
Let ${\mathcal P} = \langle x_1, \dots, x_k \, \vert \, r \rangle$ be a finite one-relator presentation. Then $K_{ {\mathcal P} }$ can covered by two PL collapsible subpolyhedra if and only if $r$ admits an algebraic collapse or ${\mathcal P}$ is of the form $\langle \, x \, |  \, (xx^{-1})^{\pm 1} \, \rangle$.
\end{theo}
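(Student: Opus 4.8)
The plan is to derive the statement as a purely formal combination of Propositions \ref{StripProp} and \ref{ExcepProp}, organized by whether or not $r$ admits an algebraic collapse. Throughout I keep the standing assumption of Section \ref{onerel} that ${\mathcal P}$ is homogeneous, so that ``$r$ admits no algebraic collapse'' is equivalent to the statement that every generator occurs at least twice in $r$. I will also use the trivial observation that the exceptional presentation $\langle\,x\,|\,(xx^{-1})^{\pm1}\,\rangle$ itself admits no algebraic collapse, since $x$ occurs twice in $xx^{-1}$; this guarantees that the two alternatives on the right-hand side interact cleanly with the case split.

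For the ``if'' direction I would argue directly. If $r$ admits an algebraic collapse, Proposition \ref{StripProp} already produces a cover of $K_{\mathcal P}$ by two PL collapsible subpolyhedra, so $\plgcat(K_{\mathcal P})\le 2$. If instead ${\mathcal P}=\langle\,x\,|\,(xx^{-1})^{\pm1}\,\rangle$, then $K_{\mathcal P}$ is a $2$-sphere with its poles identified; splitting the sphere into two hemispheres, exactly as in the final lines of the proof of Proposition \ref{ExcepProp}, exhibits the required cover. In either case the left-hand side holds.

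For the ``only if'' direction I would prove the contrapositive within the case analysis. Assume $K_{\mathcal P}$ is covered by two PL collapsible subpolyhedra and that $r$ admits no algebraic collapse; the goal is to force ${\mathcal P}=\langle\,x\,|\,(xx^{-1})^{\pm1}\,\rangle$. The crucial preliminary observation is that such a $K_{\mathcal P}$ cannot itself be PL collapsible: by homogeneity together with the absence of an algebraic collapse, every generator is traversed at least twice by the attaching loop, so in any triangulation each edge of $K_{\mathcal P}$ is a face of at least two $2$-simplices. Hence $K_{\mathcal P}$ has no free face and admits no collapse at all, which gives $\plgcat(K_{\mathcal P})\ge 2$. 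Combined with the hypothesis $\plgcat(K_{\mathcal P})\le 2$ this yields $\plgcat(K_{\mathcal P})=2$, and Proposition \ref{ExcepProp} then forces ${\mathcal P}$ to be the exceptional presentation, completing the argument.

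The theorem is therefore essentially bookkeeping over the two propositions, and the only genuine point requiring care is the one just isolated: a cover of size two gives $\plgcat(K_{\mathcal P})\le 2$, which a priori includes the possibility $\plgcat(K_{\mathcal P})=1$, whereas Proposition \ref{ExcepProp} only distinguishes $\plgcat=2$ from $\plgcat\ne 2$. Ruling out $\plgcat=1$ in the non-algebraic-collapse case, by verifying non-PL-collapsibility through the absence of free faces, is exactly what closes this gap. I note that this non-collapsibility is in any case implicitly needed to invoke Theorem \ref{No2Theo} inside the proof of Proposition \ref{ExcepProp}, so it fits naturally into the framework already in place and is the natural candidate for the ``hard part'' here.
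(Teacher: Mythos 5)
Your proposal is correct and matches the paper's route exactly: the paper offers no separate proof, presenting the theorem as an immediate corollary of Propositions \ref{StripProp} and \ref{ExcepProp}, which is precisely the bookkeeping you carry out. Your extra step ruling out $\plgcat(K_{\mathcal P})=1$ in the no-algebraic-collapse case (no free faces in any triangulation, since every edge lies in at least two $2$-simplices) is a genuine small gap in the paper's ``corollary'' framing that you correctly identify and close.
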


\section{Inner-connected polyhedra}

In the previous sections the class of inner-connected polyhedra is used as a tool to give a local-global criterion to estimate the PL geometric category. We now delve a little deeper into the structure of these polyhedra. Specifically, we recover for this class a version of the following well-known result: a closed surface can be obtained by making identifications on pairs of boundary edges of an appropriate polygon. We follow the treatment and notation of \cite[Ch.6]{Lee}.

\begin{defi}
Given a finite alphabet $S$, a \textit{word} of length $k$ in $S$ is an ordered list of $k$ symbols of $S \cup S^{-1}$. A \textit{polygonal presentation} $\mathfrak{P}$ is a finite alphabet $S$ together with a finite set of words $W_1, \dots, W_r$ in $S$ of length at least three such that every element of $S$ (or its formal inverse) appears in some word. We denote such a presentation by $\mathfrak{P} = \langle S \, | \, W_1, \dots , W_r \rangle$.
\end{defi}

A polygonal presentation $\mathfrak{P}$ determines a topological space (called the \textit{geometric realization} of $\mathfrak{P}$) in the following fashion. For each word $W_i$ in $\mathfrak{P}$ of length $k$ form the regular convex polygon of $k$	sides $P_i$ and label its edges in counterclockwise order according to $W_i$, starting by an arbitrary vertex. Now identify edges with the same label in $\coprod_{i} P_i$ by the simplicial homeomorphism that matches the vertices of the edges, inverting orientation when necessary.
\par There is a number of combinatorial movements on polygonal presentations, called \textit{elementary transformations}, that preserve the (PL) homeomorphism type of the corresponding geometric realizations. We describe here only the transformations we will use and refer to \cite[Ch.6]{Lee} for the complete list.

\begin{itemize}
	\item Reflection: $\langle S \, | \, a_1 \dots a_m, W_2, \dots, W_r \rangle \mapsto \langle S \, | \, a_m^{-1} \dots a_1^{-1}, W_2, \dots, W_r \rangle$.
	\item Rotation: $\langle S \, | \, a_1 a_2 \dots a_m, W_2, \dots, W_r \rangle \mapsto \langle S \, | \, a_2 \dots a_m a_1, W_2, \dots, W_r \rangle$.
	\item Pasting: $\langle S,e \, | \, W_1e , e^{-1}W_2, \dots, W_r \rangle \mapsto \langle S \, | \, W_1W_2, \dots, W_r \rangle$. Note that $e$ does not belong to $S$ so that none of the words $W_1 ,\dots, W_r$ should contain $e$ for this transformation to be valid.
\end{itemize}

The main result of this section states that every inner-connected polyhedron $P$ has a polygonal presentation with one word. The strategy of the proof consists of repeatedly pasting pairs of 2-simplices of a triangulation of $P$ joined by an inner edge until we are left with only one polygon. However, the vertices of an inner edge may be \textit{singular}, i.e. they may have a neighborhood that is not homeomorphic to an open disk. By pasting a pair of 2-simplices through an inner edge with singular vertices, we may create identifications in the interior of the polygon we are building. We show in the next lemma that we can avoid this situation by considering a sufficiently fine triangulation of $P$.

\begin{lemma}\label{regvert}
Let $K$ be an inner-connected simplicial complex of dimension $2$. Then, each pair of simplices of the second barycentric subdivision $K''$ of $K$ may be joined by an inner sequence such that the vertices of the inner edges in the sequence are not singular.
\end{lemma}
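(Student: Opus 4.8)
The plan is to lift the inner-connectedness of $K$ to $K''$ while confining every inner edge we use to the \emph{regular part} $R := |K| \setminus \Sigma$, where $\Sigma$ denotes the singular set of $|K|$. First I would record the local picture: a point of $|K|$ is singular precisely when it lies in the relative interior of an edge that is not inner, or is a vertex whose link fails to be a single cycle. Hence $\Sigma$ is a subcomplex of dimension at most $1$, and in particular the barycenter of every $2$-simplex and the relative interior of every inner edge lie in $R$. Consequently the barycenters $\hat\sigma$ of the $2$-simplices of $K$, together with the midpoints of those sub-edges of an inner edge that are bounded away from its endpoints, are non-singular vertices of $K''$; these will be the vertices through which we route.

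The argument then splits into a \emph{within-face} step and a \emph{crossing} step, chained together along an inner sequence of $K$. For the within-face step, fix a $2$-simplex $\sigma$ of $K$ and its barycenter $\hat\sigma$, which lies in the open interior $\mathring\sigma$ of $\sigma$. Since $\hat\sigma$ is interior, the whole star $\st_{K''}(\hat\sigma)$ lies in $\mathring\sigma \subseteq R$, so the small $2$-simplices of $K''$ incident to $\hat\sigma$ are pairwise joined by the inner edges issuing from $\hat\sigma$, all with non-singular endpoints; this gives a non-singular central cluster. I would then connect every small $2$-simplex of $K''$ contained in $\sigma$ to this cluster by inducting on the distance to $\hat\sigma$ in the dual graph, at each stage crossing an inner edge of $K''$ that points toward $\hat\sigma$ and lies in $\mathring\sigma$, hence has both endpoints non-singular. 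For the crossing step, let $e = \sigma \cap \tau$ be an inner edge of $K$: its barycenter $\hat e$ is non-singular because $e$ is inner, and the sub-edge of $e$ adjacent to $\hat e$ is an inner edge of $K''$ shared by one small $2$-simplex on the $\sigma$-side and one on the $\tau$-side, with both endpoints interior to $e$ and therefore non-singular. Passing through this sub-edge moves us from $\sigma$ to $\tau$ without meeting $\Sigma$.

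Combining the two steps, given $2$-simplices of $K''$ lying in faces $\sigma,\tau$ of $K$, I would take an inner sequence $\sigma=\eta_1,\dots,\eta_r=\tau$ of $K$ (available since $K$ is inner-connected), route from the starting small $2$-simplex to the central cluster of $\eta_1$, cross each inner edge $\eta_i \cap \eta_{i+1}$ near its barycenter as above, and finally route within $\eta_r$ to the target. By construction every inner edge appearing in the concatenated sequence has non-singular endpoints.

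The hard part will be the within-face induction in the immediate vicinity of a singular vertex $v$ of $K$: a small $2$-simplex of $K''$ tucked into a corner at $v$ must not be \emph{trapped}, that is, forced to leave only across inner edges incident to $v$. This is exactly the point where passing to the \emph{second} barycentric subdivision is used rather than the first: after subdividing twice, each such corner $2$-simplex acquires an inner edge whose relative interior lies in $\mathring\sigma$ and whose endpoints avoid $v$, giving the buffer that lets the induction proceed \emph{around} $v$ instead of through it. Making this local accounting precise---enumerating, in $\st_{K''}(v)$, the small $2$-simplices and exhibiting for each an interior-pointing inner edge with non-singular endpoints---is the technical core of the proof, after which the global statement follows by the chaining described above.
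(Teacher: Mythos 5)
Your strategy---lift an inner sequence of $K$ to $K''$, route within each closed $2$-simplex toward its barycenter, and cross each shared inner edge of $K$ through a sub-edge near its midpoint---is the same strategy as the paper's (one-line) proof, and both your crossing step and the interior part of your within-face step are correct. The gap sits exactly where you place ``the technical core,'' and your proposed resolution of it does not work. Fix a $2$-simplex $\sigma$ of $K$, a vertex $v$ of $\sigma$ and an edge $e$ of $\sigma$ containing $v$, and consider the $2$-simplex $T$ of $\sigma''$ whose vertices are the barycenters of the chain $\{v\}\subsetneq v\hat e\subsetneq v\hat e\hat\sigma$ of simplices of $\sigma'$ (here $\hat\tau$ denotes the barycenter of $\tau$). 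The only edge of $T$ that avoids $v$ and has relative interior in the interior of $\sigma$ is $\widehat{v\hat e}\,\widehat{v\hat e\hat\sigma}$, but its endpoint $\widehat{v\hat e}$ lies in the \emph{relative interior of $e$}. If $e$ is not an inner edge of $K$ (a face of one, or of three or more, $2$-simplices), every point of that relative interior is singular, so \emph{all three} edges of $T$ have a singular vertex and $T$ cannot be reached by any admissible inner sequence: ``endpoints avoid $v$'' is strictly weaker than ``endpoints avoid the singular set,'' and that is the claim you actually need. The same trap occurs for every $2$-simplex of $K''$ having an edge contained in a non-inner edge of $K$, and passing to further barycentric subdivisions only reproduces smaller trapped simplices of the same type, so no buffer appears.

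Moreover this is not a missing technical detail that a finer local accounting around $v$ could supply, because the statement itself fails in this configuration. Inner-connectedness does not force every edge of $K$ to be inner: in any triangulation of the dunce hat (which the paper itself lists as inner-connected), the singular circle consists of edges that are faces of exactly three $2$-simplices and all of whose points, vertices included, are singular; the corresponding corner simplices of $K''$ are therefore trapped in the above sense. So the within-face induction cannot be completed as stated. For comparison, the paper's proof only asserts that the lifted sequence ``avoids the vertices of $K$,'' which is again weaker than avoiding all singular points and runs into the same obstruction; the merit of your write-up is that it isolates precisely where the difficulty lives, but the difficulty is an actual counterexample to the claimed routing, not a step that more care would close.
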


\begin{proof}
Let $\sigma_1$, $\sigma_2$ be a pair of 2-simplices of $K''$ and let $\tau_1$, $\tau_2$ be respectively the 2-simplices of $K$ containing them. Since $K$ is inner-connected, there is an inner sequence $S$ in $K$ joining $\tau_1$ to $\tau_2$. It is not difficult to find an inner sequence in $K''$ formed by 2-simplices contained in 2-simplices of $S$ that avoids the vertices of $K$ and joins $\sigma_1$ to $\sigma_2$.
\end{proof}

\begin{theo}\label{onepoly}
Let $P$ be an inner-connected polyhedron. Then $P$ admits a polygonal presentation with only one word.
\end{theo}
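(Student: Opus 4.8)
The plan is to carry out the strategy sketched above: present $P$ by a many-worded polygonal presentation read off from a triangulation, and then fuse it into a single word by iterated Pasting.

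First I would fix a triangulation of $P$ and pass to its second barycentric subdivision $K''$, which again triangulates $P$ and is inner-connected. Orienting each $2$-simplex and reading its three boundary edges in counterclockwise order yields a polygonal presentation with one length-three word per $2$-simplex. Its geometric realization is $P$: re-identifying equally labelled edges simply reconstructs $K''$, and the labelling convention is consistent because for an inner edge the two counterclockwise readings of the triangles meeting along it traverse the edge in opposite directions, so the edge appears once as $e$ and once as $e^{-1}$. Edges that are faces of one or of at least three triangles merely contribute the corresponding number of occurrences of their label, which the definition of a polygonal presentation allows.

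Next I would make the pasting bookkeeping precise. Consider the dual graph whose vertices are the $2$-simplices of $K''$ and whose edges are the inner edges joining two of them. By Lemma \ref{regvert} the subgraph spanned by the inner edges whose two endpoints are non-singular is already connected, hence contains a spanning tree $T$. I would process the edges of $T$ one at a time in an order that never closes a cycle. When an edge $e \in T$ is reached, it is an inner edge, so it occurs in exactly two of the original words and hence in exactly two of the current ones; because the two triangles bounding $e$ still lie in different pieces, these occurrences sit in two distinct current words, as $\dots e$ and $e^{-1}\dots$ after suitable rotations. The Pasting transformation then merges the two words into one. Each pasting lowers the number of words by one, and since $T$ spans all $r$ triangles, after $r-1$ pastings a single word $W$ of length $r+2 \geq 3$ remains. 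As every elementary transformation preserves the geometric realization, $\langle S \mid W \rangle$ is a one-word polygonal presentation of $P$.

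The crux is to guarantee that what we assemble is genuinely a \emph{polygon}, that is, that each intermediate piece is a disk and the final word bounds a disk rather than a surface with identifications pushed into its interior --- the difficulty flagged just before Lemma \ref{regvert}. Two features of the construction secure this. Because we paste only along edges of a tree, we never close up a cycle of triangles around a vertex, so no vertex is ever completely encircled and absorbed into the interior; and because every gluing edge has non-singular endpoints, $P$ is locally a manifold along it, so attaching the second triangular sheet merges two disks along a single boundary edge and identifies only the two boundary corners of that edge, creating no identification in the interior. I would record this as the invariant ``every current piece is a disk,'' proved by induction on the number of pastings: a single $2$-simplex is a disk, and attaching a disk along one boundary edge with non-singular endpoints yields a disk. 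This invariant, maintained to the last step, is what certifies that the surviving word $W$ is the boundary word of an honest polygon.
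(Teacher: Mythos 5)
Your proposal is correct and follows essentially the same route as the paper: start from the one-triangle-per-word presentation of (the second barycentric subdivision of) a triangulation, use Lemma \ref{regvert} to see that the dual graph along inner edges with non-singular vertices is connected, and paste repeatedly along such edges until one word remains. Your explicit spanning tree of that dual graph is just a cleaner bookkeeping for the paper's greedy induction (at each stage it finds a non-singular inner edge occurring once in $W_1$ and once in another word); the only cosmetic omission is that you may also need a reflection, not just rotations, to put the two occurrences of the gluing edge into the forms $W_1e$ and $e^{-1}W_2$.
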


\begin{proof}
Take a simplicial complex $K$ that triangulates $P$. By Lemma \ref{regvert}, we may assume that every pair of 2-simplices of $K$ is joined by an inner sequence such that the inner edges involved do not have singular vertices. Choose a different label for each edge of $K$ and fix an orientation for its simplices. Consider the polygonal presentation $\mathfrak{P}$ that has as alphabet the set of labels of edges of $K$ and a word for each 2-simplex, determined by the edges of its boundary in the order given by the prescribed orientations. Since the geometric realization of $\mathfrak{P}$ is homeomorphic to $P$, it suffices to reduce $\mathfrak{P}$ to a presentation with one word by applying elementary transformations. The 2-simplex that corresponds to $W_1$ has at least one inner edge $a$ with no singular vertices. Without loss of generality, assume that $W_2$ is the only other word in which $a$ or $a^{-1}$ appears. By applying rotations and reflections we may assume that $W_1 = \tilde{W}_1 a$, $W_2 = a^{-1}\tilde{W}_2$ and paste them to reduce the number of words in $\mathfrak{P}$. Inductively, suppose that there is more than one word in the presentation. We claim that there is one inner edge with no singular vertices of $K$ that appears exactly once in $W_1$. Indeed, if it was not the case, it would be impossible to connect a 2-simplex of (the subcomplex determined by) $W_1$ and a 2-simplex not in $W_1$ by an inner sequence with no singular vertices in its edges. As before, rearrange the words and perform rotations and reflections in such a way that it is possible to paste words $W_1$ and $W_2$.
\end{proof}

\begin{rem}
Let $P$ be an inner-connected polyhedron and let $\mathfrak{P} = \langle S \, |\, W \rangle$ be a polygonal presentation of $P$ with one word obtained as in Theorem \ref{onepoly}. Consider the subgraph $G$ of $P$ formed by the edges determined by $S$. The word $W$ defines a surjective combinatorial map $\varphi: S^{1} \to G$ for a suitable triangulation of $S^{1}$. This provides an alternative description of inner-connected polyhedra. Concretely, given a simplicial graph $G$ and a surjective combinatorial map $\varphi: S^{1} \to G$ we obtain an inner-connected polyhedron as the space underlying the \rm{CW}-complex that consists of one 2-cell attached to $G$ according to $\varphi$. In the case that $G$ is homeomorphic to a bouquet of spheres of dimension 1, the resulting space is a (homogeneous) one-relator presentation complex. As another example, if the combinatorial map $\varphi$ touches every edge of $G$ at most twice, the inner-connected polyhedron we get from this construction is a pseudosurface.
\end{rem}

\section*{Acknowledgments}
The author is grateful to Gabriel Minian for many useful discussions during the preparation of this article.

\end{document}